\newsavebox{\tablebox}
\newtheorem{cor}{Corollary}
\newtheorem{example}{Example}[section]
\crefname{hypothesis}{Hypothesis}{Hypotheses}
\Crefname{ALC@unique}{Line}{Lines}
\colorlet{texcscolor}{blue!50!black}
\colorlet{texemcolor}{red!70!black}
\colorlet{texpreamble}{red!70!black}
\colorlet{codebackground}{black!25!white!25}
\newcommand{\iq}{\mathbf{i}}
\newcommand{\jq}{\mathbf{j}}
\newcommand{\kq}{\mathbf{k}}
\newcommand{\Ks}{\mathscr{K}}
\newcommand{\Ls}{\mathscr{L}}
\newcommand{\Rs}{\mathscr{R}}
\newcommand{\diag}{\operatorname{diag}}
\def\Aq{\boldsymbol{\mathcal{A}}}  
\def\aq{{\bf a}}
\def\bq{{\bf b}}
\newcommand{\dq}{\mathbf{d}}
\def\eq{{\bf e}}
\def\Hq{\boldsymbol{\mathcal{H}}}
\def\hq{{\bf h}}
\def\Mq{\boldsymbol{\mathcal{M}}}
\def\Nq{\boldsymbol{\mathcal{N}}}
\newcommand{\nq}{\mathbf{n}}
\def\Pq{\boldsymbol{\mathcal{P}}}
\def\Qq{\boldsymbol{\mathcal{Q}}}
\def\qq{{\bf q}}
\def\Rq{\boldsymbol{\mathcal{R}}}
\def\rqq{{\bf r}}
\def\dq{{\bf d}}
\def\rq{{\bf r}}
\newcommand{\uq}{\mathbf{u}}
\newcommand{\Vq}{\boldsymbol{\mathcal{V}}}
\newcommand{\vq}{\mathbf{v}}
\newcommand{\wq}{\mathbf{w}}
\newcommand{\Xq}{\boldsymbol{\mathcal{X}}}
\newcommand{\xq}{\mathbf{x}}
\newcommand{\yq}{\mathbf{y}}
\newcommand{\Zq}{\boldsymbol{\mathcal{Z}}}
\newcommand{\zq}{\mathbf{z}}
\newcommand{\Kq}{\boldsymbol{\mathcal{K}}}
\newcommand{\mq}{\mathbf{m}}
\def\Axq{ \boldsymbol{\mathcal{A}}{\bf x}}
\def\Avq{ \boldsymbol{\mathcal{A}}{\bf v}}
\def\AVq{ \boldsymbol{\mathcal{A}}\boldsymbol{\mathcal{V}}}
\title{Flexible Quaternion Generalized Minimal Residual Method for Ill-Posed Quaternion Inverse Problems
\thanks{This paper is supported in part by
the research grant CPG2024-00034-FST from University of Macau;
National Natural Science Foundation of China under grants 12171210, 12090011, and 11771188;
the Priority Academic Program Development Project (PAPD);
the Top-notch Academic Programs Project PPZY2015A013 of Jiangsu Higher Education Institutions.}
}
\author{Xuan Liu\thanks{Department of Mathematics, University of Macau, Macao. E-mail: yc07482@um.edu.mo}
\and Zhigang Jia\thanks{Corresponding author. School of Mathematics and Statistics and Research Institute of Mathematical Science, Jiangsu Normal University, Xuzhou 221116,
People's Republic China. Email: zhgjia@jsnu.edu.cn}
\and Xiaoqing Jin\thanks{Department of Mathematics, University of Macau, Macao. E-mail: xqjin@um.edu.mo}
}
\begin{document}
\maketitle

\begin{tcbverbatimwrite}{tmp_\jobname_abstract.tex}
\begin{abstract}
The main goal of this paper is to propose a new quaternion total variation regularization model for solving linear ill-posed quaternion inverse problems,
which arise from three-dimensional signal filtering or color image processing.
The quaternion total variation term in the model is represented by collaborative total variation regularization and approximated by a quaternion iteratively reweighted norm.
A novel flexible quaternion generalized minimal residual method is presented to quickly solve this model.
An improved convergence theory is established to obtain a sharp upper bound of the residual norm of quaternion minimal residual method (QGMRES).
The convergence theory is also presented for preconditioned QGMRES.
Numerical experiments indicate the superiority of the proposed model and algorithms over the state-of-the-art methods in terms of iteration steps, CPU time, and the quality criteria of restored color images.

\end{abstract}

\begin{keywords}
Preconditioned QGMRES; Quaternion polynomial;
Collaborative total variation regularization;
Quaternion iteratively reweighted norm.
\end{keywords}

\begin{AMS}
  	65F10, 97N30, 94A08
  \end{AMS}
\end{tcbverbatimwrite}
\input{tmp_\jobname_abstract.tex}

\section{Introduction}
In this paper we consider the large-scale linear ill-posed inverse problems
\begin{equation}\label{ill}
  \bq = \Aq\xq + \eq,
\end{equation}
where $\bq$ is an observed $N$-dimensional quaternion vector, $\Aq$ is an $N$-by-$N$ quaternion matrix,
$\xq$ is an unknown quaternion vector,
and $\eq$ represents rounding error or noise. 
Usually, $\Aq$ is ill-conditioned with ill-determined rank.
Such systems arise when discretizing inverse problems,
which are central in many applications such as three-dimensional signal processing and color image restoration
(see \cite{jnw19, jing21} and the references therein).
In practical applications, $\xq$ often denotes the unknown sharp signal (image) we wish to recover,
and $\bq$ denotes the measured signal (blurred and noisy image).
However, the quaternion inverse problem \eqref{ill} still remains an open problem due to the lack of efficient computational methods.

Recall the history of the general real large-scale linear ill-posed inverse problems $b = Ax + e$,
where $A\in \mathbb{R}^{N\times N}$ and $b,e,x\in \mathbb{R}^N$.
Many problems like this typically require the choice of a good prior that makes assumptions on the structure of the underlying signal or image we seek to estimate.
This prior often takes the form of a regularization term for an energy functional which is to be minimized \cite{Chung2011, Calvetti2005, Engl1996}.
Observing that quadratic regularization did not allow recovering sharp discontinuities, Rudin, Osher, and Fatemi proposed the total variation (TV) penalty \cite{Rudin1992} for solving this problem.
Due to the ill-conditioning of the coefficient matrix $A$ and the presence of noise in the right-hand side \cite{Vogel2002}, it is necessary to employ regularization in order to compute a meaningful approximation of $x$.
Thus, the well-known Tikhonov method can be applied \cite{Blomgren1998,Engl1996, Hansen2010, Mueller2012, Vogel2002} in general form, which computes a regularized solution
\begin{equation}\label{Tikhonov}
  x_{L,\lambda} = \arg \min _{x\in \mathbb{R}^{N}}
\|Ax-b\|_{2}^{2} + \lambda\|Lx\|_{2}^{2},
\end{equation}
where $\lambda>0$ is called a regularization parameter, which specifies the amount of smoothing
by balancing the fit-to-data term $\|Ax-b\|_{2}^{2}$ and the regularization term $\|Lx\|_{2}^{2}$,
$L\in \mathbb{R}^{M\times N}$ is called a regularization matrix,
which enforces some smoothing in $x_{L,\lambda}$ by including the penalization $\|Lx\|_{2}^{2}$ in the above objective function \cite{Kilmer2007}. 
The choice of $L$ and $\lambda$ can be crucial to obtain a good approximation of $x$.
When $L=I$, the problem is said to be in standard form.

If the goal is to preserve jumps or edges in $x$, total variation regularization is more popular \cite{Chan2005, Jensen2007}, which computes a regularized solution
\begin{equation}\label{TV}
x_{\text{TV},\lambda}=\arg\min_{x\in\mathbb{R}^{N}}\|Ax-b\|_{2}^{2}+\lambda \text{TV}(x),
\end{equation}
where the weighted term $\text{TV}(x)$ is called a total variation operator \cite{Wohlberg2007}.
In the above Tikhonov-like problem, the effect of $\lambda$TV($x$) is to produce piecewise-constant reconstructions by penalizing solutions with many steep changes in the gradient or enforcing solutions with a sparse gradient.

To solve the quaternion ill-posed inverse problems \eqref{ill}, we can generalize the above Tikhonov-like model to quaternion field.
Due to the multiplicative non-commutativity of quaternions, it is difficult to simplify and solve the model.
In the following discussion, we will establish a new quaternion iterative method to solve this problem.
The main contributions are in three aspects:

\begin{itemize}

\item A new quaternion total variation (QTV) regularization model is proposed to solve \eqref{ill}.
    By introducing a quaternion iteratively reweighted norm (QIRN) strategy,
    the challenging QTV regularization model is equivalently transformed into a quaternion Tikhonov regularization model.

\item A novel flexible quaternion generalized minimal residual method (FQGMRES) is proposed for solving the quaternion Tikhonov regularization model.
That is, the quaternion ill-posed inverse problems \eqref{ill} are solved by FQGMRES.
The convergence theory with a sharp upper bound of residual norm is proposed for both quaternion generalized minimal residual method (QGMRES) and preconditioned QGMRES.

\item The proposed algorithms are successfully applied to the three-dimensional signal filtering and color image processing.
    In color image restoration, the proposed QTV-FQGMRES improves the PSNR, SNR, and SSIM values of recovered color images by more than 20\%.
\end{itemize}

This paper is organized as follows.
In Section \ref{s:prilim}, we review quaternion matrices and collaborative total variation (CTV).
In Section \ref{s:preQGMRES}, we propose preconditioned QGMRES methods and the convergence theory.
In Section \ref{s:qctv}, we present a new quaternion total variation regularization model and solve this model.
In Section \ref{s:Experiments}, numerical examples for the three-dimensional signal filtering problem and color image restoration are tested to demonstrate that the preconditioned
QGMRES methods' performance is better than the compared methods.

\section{Preliminaries}\label{s:prilim}

In this section, we recall necessary information about quaternions, real counterpart, and CTV regularization.

\subsection{Quaternion matrices}\label{s:quaternion}
Let $\mathbb{Q}$ denote the quaternion skew-field \cite{zhf97} and $\mathbb{Q}^{m\times n}$ the set of $m\times n$ matrices on $\mathbb{Q}$.
A quaternion $\qq$ consists of four real numbers and three imaginary units, defined as follows:
$\qq=q_0+q_1\iq+q_2\jq+q_3\kq$,
with $q_0,q_1,q_2,q_3\in \mathbb{R}$ and $\iq, \jq, \kq$ satisfying $\iq^2=\jq^2=\kq^2=\iq\jq\kq=-1$.
If $q_0 = 0$, $\qq$ is a pure quaternion.
Notice that $\mathbb{Q}$ is an associative but multiplicative non-commutative algebra of rank four over $\mathbb{R}$ \cite{Hamilton66, rodl14}.
The conjugate of $\qq$ is $\overline{\qq}=q_0-q_1\iq-q_2\jq-q_3\kq$ and
the quaternion norm $|\qq|$ is defined by
$
|\qq|^2=\overline{\qq}\qq=q_0^2+q_1^2+q_2^2+q_3^2.
$
Every nonzero quaternion is invertible,
and the unique inverse is given by $1/\qq=\overline{\qq}/|\qq|^2$.
A quaternion matrix $\Aq\in \mathbb{Q}^{m\times n}$ is represented by
$
\Aq=A_0+A_1\iq+A_2\jq+A_3\kq,
$
where $A_0, A_1,A_2,A_3 \in \mathbb{R}^{m\times n}$.
Moreover, if $A_0$ is zero, then $\Aq$ is called a purely imaginary quaternion matrix.
The conjugate transpose of $\Aq$ is defined as
$
\Aq^*=A_0^T-A_1^T\iq-A_2^T\jq-A_3^T\kq.
$
Similarly with the complex case,
a quaternion matrix $\Aq\in\mathbb{Q}^{n\times n}$ is Hermitian (skew-Hermitian) if $\Aq^*=\Aq$ ($\Aq^*=-\Aq$).
A quaternion matrix $\Aq\in\mathbb{Q}^{n\times n}$ is unitary if $\Aq^*\Aq=\Aq\Aq^*=I_n$,
where $I_n$ denotes the $n \times n$ identity matrix.
A color image with the spatial resolution of $m\times n$ pixels can be represent by an $m\times n$ pure quaternion matrix \cite{jns19nla},
i.e., $ A_0=0,~ A_1=[r_{ij}],~ A_2=[g_{ij}],~ A_3=[b_{ij}]\in\mathbb{R}^{m\times n} $, and $ r_{ij}$, $ g_{ij}$ and $b_{ij}$ are respectively the red, green and blue pixel values at the location $(i,j)$ in this color image.

In the following, we review some basic definitions of quaternion matrices \cite{jing21}.
 
\begin{myDef}[Inner product]\label{d:innerproduct}
The inner product of two quaternion vectors, $\wq=[\wq_{i}],~\vq=[\vq_{i}]\in\mathbb{Q}^{n}$,  is defined as
$
\langle\wq, \vq\rangle:=\sum\limits_{i=1}^n\vq_i^*\wq_i.
$
The inner product of two quaternion matrices,
$\Mq=[\mq_{ij}],~\Nq=[\nq_{ij}]\in\mathbb{Q}^{m\times n}$,
is defined as $
\langle\Mq, \Nq\rangle:={\rm{trace}}(\Nq^*\Mq)=\sum\limits_{j=1}^n\sum\limits_{i=1}^m\nq_{ji}^*\mq_{ij}.
$
\end{myDef}
\begin{myDef}[Norm]
The $p$-norm of the quaternion vector, $\vq=[\vq_{i}]\in\mathbb{Q}^{n}$,
is defined as
$
\|\vq\|_p=\big(\sum_{i=1}^n |\vq_{i}|^p\big)^{\frac{1}{p}}, \quad p\ge 1.
$
The $p$-norm and F-norm of the quaternion matrix,
$\Aq=[\aq_{ij}]\in\mathbb{Q}^{m\times n}$,
are respectively defined as
$\|\Aq\|_p=\max_{\xq\in\mathbb{Q}^{n}/\{0\}}\frac{\|\Aq\xq\|_p}{\|\xq\|_p}$,
$p\ge 1,$
and
$ \|\Aq\|_F=\big(\sum_{j=1}^n\sum_{i=1}^m |\aq_{ij}|^2\big)^{\frac{1}{2}}.
$
\end{myDef}

Note that the $n$-dimensional quaternion vector space  $\mathbb{Q}^n$ is a right quaternionic Hilbert space with the inner product. All quaternion vectors in
$$
\Ks=\{ \vq_1\boldsymbol{\alpha}_1+\vq_2\boldsymbol{\alpha}_1+\cdots+\vq_{m}\boldsymbol{\alpha}_{m}~|
~\vq_j\in\mathbb{Q}^n,~\boldsymbol{\alpha}_j\in\mathbb{Q},~j=1,2,\ldots,m\}
$$
generate a subspace of $\mathbb{Q}^n$ of dimension ${\bf rank}([\vq_1,\vq_2,\ldots,\vq_{m}])$.

\subsection{Quaternion systems and real counterpart of quaternion matrices}\label{s:realcounterpart}

For the quaternion linear systems of the form
\begin{equation}\label{e:qsystems}
\Aq\xq=\bq,
\end{equation}
where $\Aq \in \mathbb{Q}^{n \times n}$ is an invertible quaternion matrix,
$\bq \in \mathbb{Q}^{n}$ is an $n$-dimensional quaternion vector,
and $\xq \in \mathbb{Q}^n$ is an unknown quaternion vector.
To solving this system,
one faces the difficulty is the fast implementation of quaternion operations,
including the efficient storage and exchange of their four parts (one real part and three imaginary parts).
The recently proposed structure-preserving algorithms \cite{jwl13, jwzc18, wei2018} present a new idea to overcome multiplicative non-commutativity of quaternions and the dimensional expansion problem of the widely used real presentation method of quaternion matrices.
These structure-preserving methods are designed to keep the algebraic symmetries or properties of continuous or discrete equations in the solving process,
with the aim of accurate, stable and efficient calculation.
We consider the following mapping:
\begin{equation} \label{e:RA0}
\Rs(\Aq)=\left[\begin{array}{rrrr}
A_0    &-A_1  &-A_2    &-A_3\\
A_1    &A_0   &-A_3    & A_2\\
A_2    &A_3   &A_0     &-A_1\\
A_3    &-A_2 &A_1     & A_0\\
 \end{array}
 \right],
 \end{equation}
where $\Rs(\cdot)$ is a linear homeomorphic mapping from quaternion matrices (or vectors) to their real counterpart.
The inverse mapping of $\Rs$  is defined  by $\Rs^{-1}(\Rs(\Aq))=\Aq$.
Then the quaternion linear systems \eqref{e:qsystems} can be equivalently rewritten as a real matrix equation
\begin{equation}\label{e:realME}
\Rs(\Aq)\Rs(\xq)=\Rs(\bq).
\end{equation}

\subsection{Collaborative total variation}

The class of regularizations collaborative total variation (CTV) was proposed by \cite{Duran2015}.
Define the Euclidean spaces
$X:= \mathbb{R}^{N\times C}$ and $Y:= \mathbb{R}^{N\times M\times C}$,
where $N$ is the number of pixels of a image, $M$ is the number of directional derivatives, and $C$ is the number of color channels.
We consider a color image as a two-dimensional (2D) matrix of size $N\times C$ denoted by $u=[u_{1},u_{2},\ldots,u_{C}]\in X$, with $u_{k}=[u_1^k,u_2^k, \ldots, u_N^k]^{T}\in \mathbb{R}^{N}$ for each $k\in\{1,2,\ldots,C\}$.
On the other hand, we define the linear operator $D: X\longrightarrow Y$ such that $Du\in Y$ is a three-dimensional (3D) matrix or tensor.

The general problem concerned with is
 $\min_{u\in X}\|Du\|_{\vec{b},a} +G(u),$
where $G: X \rightarrow \mathbb{R}$ is a proper, convex, lower semicontinuous (l.s.c.) functional and $\|\cdot\|_{\vec{b},a}$ is a
collaborative sparsity enforcing norm penalizing the gradient of the color image to be detailed later.
Let $col$, $der$, and $pix$ be abbreviations for color, derivative, and pixel, respectively.
For $u\in\mathbb{R}^{N\times C}$,
we can take the $\ell^{p}$ norm to one of the dimensions of the initial 2D structure,
then the $\ell^{r}$ norm along one of the dimensions of the remaining real 2D matrix.
We obtain the $\ell^{p,r}(col,pix)$ norm
$
  \|u\|_{p,r}:=\Big(\sum_{i=1}^{N}\big(\sum_{k=1}^{C}|u_i^k|^{p}\big)^{\frac{r}{p}}\Big)^{\frac{1}{r}}.
$

If we first take the $\ell^{p}$ norm to one of the dimensions of
the initial 3D structure, then the $\ell^{q}$ norm along one of the dimensions of the remaining 2D
matrix and, finally, the $\ell^{r}$ norm to the remaining vector, we obtain the $\ell^{p,q,r}(col,der,pix)$ norm
\begin{equation}\label{l_pqr}
  \|Du\|_{p,q,r}:=\Bigg(\sum_{i=1}^{N}\Big(\sum_{j=1}^{M}\big(\sum_{k=1}^{C}|D_{j}u_{i}^{k}|^{p}\big)^{\frac{q}{p}}\Big)^{\frac{r}{q}}\Bigg)^{\frac{1}{r}}.
\end{equation}

\section{Preconditioned Quaternion Generalized Minimal Residual Method}\label{s:preQGMRES}
In this section, we improve convergence rate of QGMRES and
propose the preconditioned QGMRES algorithm for solving large-scale quaternion linear systems as in \eqref{e:qsystems}. We also present a sharp upper bound of residual norm for preconditioned QGMRES.

We begin with presenting a brief review of the quaternion GMRES \cite{jing21}.
Let $\xq_0\in \mathbb{Q}^{n}$ represent an arbitrary initial guess to the solution of  \eqref{e:qsystems}, and let $\rqq_0=\bq-\Aq\xq_0$ be the associated residual vector.
We always assume that $\rqq_0\neq 0$ in the following discussion.
The algorithm QGMRES is shown in Algorithm \ref{code:QGMRES}.

\begin{algorithm}[H]
\setcounter{algorithm}{0}
\caption{QGMRES \cite{jing21}}
\label{code:QGMRES}
\begin{algorithmic}[1]
\STATE Compute $\rqq_0=\bq-\Axq_0$, $\beta:=\|\rqq_0\|_2\ne 0$, and $\vq_1:=\rqq_0/\beta$
\FOR{ $j=1,2,\ldots,m$}\label{gmresq2}
\STATE Compute $\boldsymbol{\omega}_j:=\Avq_j$\label{gmresq3}
\FOR{ $i=1,2,\ldots,j$}
\STATE  $\hq_{ij}=\langle\boldsymbol{\omega}_j,\vq_i\rangle$;
\STATE  $\boldsymbol{\omega}_j:=\boldsymbol{\omega}_j-\hq_{ij}\vq_i$
\ENDFOR
\STATE $\hq_{j+1,j}=\|\boldsymbol{\omega}_j\|_2$
\STATE {If $\hq_{j+1,j}=0$ set $m:=j$ and go to \ref{gmresq13}}
\STATE $\vq_{j+1}=\boldsymbol{\omega}_j/\hq_{j+1,j}$ \label{gmresq11}
\ENDFOR\label{gmresq12}
\STATE Define $(m+1)\times m$ quaternion Hessenberg matrix $\bar{\Hq}_m=[\hq_{ij}]_{1\leq i\leq m+1, 1\leq j\leq m}$.\\[-0.5pt]\label{gmresq13}
\STATE Compute $\yq_m$ the minimizer of $\|\beta \textbf{e}_1-\bar{\Hq}_m\yq\|_2$
and $\xq_m=\xq_0+\textbf{V}_m\yq_m$.\label{gmresq14}
\end{algorithmic}
\end{algorithm}

Introduce the quaternion Krylov subspaces
\begin{equation}\label{e:KAr0}
\Ks_m(\Aq,\rqq_0)=\texttt{span}\{\rqq_0,\Aq\rqq_0,\Aq^2\rqq_0,\ldots,\Aq^{m-1}\rqq_0\}.
\end{equation}
with any positive integer $m$.
In the following, we denote by $L_m(\Aq,\rqq_0)$ the (right-hand side) linear combination of $\rqq_0$, $\Aq\rqq_0$, $\ldots$, $\Aq^{m-1}\rqq_0$, i.e.,
\begin{equation}\label{e:LAv}
L_m(\Aq,\rqq_0):=\rqq_0\boldsymbol{\alpha}_0+\Aq\rqq_0\boldsymbol{\alpha}_1+\cdots+\Aq^{m-1}\rqq_0\boldsymbol{\alpha}_{m-1},
\end{equation}
where $\boldsymbol{\alpha}_0,\boldsymbol{\alpha}_1,\cdots,\boldsymbol{\alpha}_{m-1}$ are quaternion scalars.
Define the Krylov subspace matrix
\begin{equation*}
\Kq:=[\rqq_0,\Aq\rqq_0, \Aq^2\rqq_0, \ldots,\Aq^{m-1}\rqq_0].
\end{equation*}
Then the dimension of $\Ks_m(\Aq,\rqq_0)$ is exactly the rank of $\Kq$.
It is expected to increase as $m$ and then arrive at the maximal value, called the grade of $\rqq_0$ with respect to $\Aq$ which proposed by \cite{jing21}.

\begin{myDef}[Grade]\label{d:grade}
The \emph{grade} of $\vq$ with respect to $\Aq$ is the smallest positive integer, $\mu$, such that $\vq,\Avq,\ldots,\Aq^{\mu-1}\vq$ are linearly independent but $\vq, \Avq, \ldots, \Aq^{\mu-1}\vq$, $\Aq^{\mu}\vq$ are linearly dependent.
\end{myDef}

Based on the real structure-preserving transformations proposed in \cite{jwl13,jwzc18},
the structure-preserving property of Algorithm \ref{code:QGMRES} is shown in the process of generating the quaternion upper Hessenberg matrix $\bar{\Hq}_m\in \mathbb{Q}^{(m+1)\times m}$,
i.e., quaternion Arnoldi method.
The quaternion Arnoldi process can be used to construct a basis for this Krylov subspace \eqref{e:KAr0},
which leads to the associated decomposition
\begin{equation}\label{e:AV=VH}
\AVq_m = \Vq_{m+1}\bar{\Hq}_m,
\end{equation}
where $\Vq_{m+1}:=[\vq_1,\vq_2,\ldots,\vq_{m+1}]\in \mathbb{Q}^{n\times (m+1)}$ has orthonormal columns that span the quaternion Krylov subspace $\Ks_m(\Aq,\rqq_0)$.
Let $\Hq_m:=H_0+H_1\iq+H_2\jq+H_3\kq$ be the matrix obtained from $\bar{\Hq}_m$ by deleting its last row.
Then we have
$
\Vq_m^*\AVq_m = \Hq_m.
$
Note that it is inexpensive to compute the minimizer $\yq_m$ in line \ref{gmresq14} of Algorithm \ref{code:QGMRES} of the upper Hessenberg quaternion least-squares problem (HQLS) when $m$ is small.
A method for solving this problem is also presented in \cite{jing21}.

\subsection{Improved convergence theory of QGMRES}\label{s:improvedconvergence}

Our goal is to improve the upper bound of the residual norm of QGMRES and present a new global convergence result \cite{sasc86}.
To do this we propose the definition of quaternion polynomials for the first time.

\begin{myDef}[Quaternion polynomial]
A quaternion polynomial is defined as
\begin{equation}\label{e:qpoly}
p(\boldsymbol{\lambda})=\boldsymbol{\alpha}_0+\boldsymbol{\lambda}\boldsymbol{\alpha}_1+\cdots+\boldsymbol{\lambda}^{n}\boldsymbol{\alpha}_{n},
\end{equation}
where $\boldsymbol{\lambda}\in \mathbb{Q}$ and $\boldsymbol{\alpha}_j$ $(j=1,2,\ldots,n)$ are quaternion coefficients. The set of all these quaternion polynomials of degree $n$ is denoted by $\mathbb{P}_n^q$.
\end{myDef}

Recall that $\boldsymbol{\alpha}\in \mathbb{Q}$ is similar to $\boldsymbol{\beta}\in \mathbb{Q}$ if there exists a nonzero quaternion $\boldsymbol{s}$ such that $\boldsymbol{\alpha}=\boldsymbol{s}\boldsymbol{\beta}\boldsymbol{s}^{-1}$.
Denote the set of similar quaternions of $\boldsymbol{\alpha}$ by $[\boldsymbol{\alpha}]_s$ and then we define similar quaternion polynomials.

\begin{myDef}
Suppose $p(\boldsymbol{\lambda})=\boldsymbol{\alpha}_0+\boldsymbol{\lambda}\boldsymbol{\alpha}_1+\cdots+\boldsymbol{\lambda}^{n}\boldsymbol{\alpha}_{n}$ and $q(\boldsymbol{\lambda})=\boldsymbol{\beta}_0+\boldsymbol{\lambda}\boldsymbol{\beta}_1+\cdots+\boldsymbol{\lambda}^{n}\boldsymbol{\beta}_{n}$ are two quaternion polynomials.
If $\boldsymbol{\alpha}_j=\boldsymbol{s}_j\boldsymbol{\beta}_j\boldsymbol{s}_j^{-1}$
for $\boldsymbol{s}_j\in \mathbb{Q}/{0}$, $j=1,2,\ldots,n$,
then we say that $p(\boldsymbol{\lambda})$ is similar to $q(\boldsymbol{\lambda})$.
The set of similar quaternion polynomials of $p(\boldsymbol{\lambda})$ is denoted by $[p(\boldsymbol{\lambda})]_s$.
\end{myDef}

If all $\boldsymbol{s}_j$ are equal to $\boldsymbol{s}$, then
\begin{align*}
q(\boldsymbol{\lambda})&=
\boldsymbol{s}^{-1}\boldsymbol{\alpha}_0\boldsymbol{s}+\boldsymbol{\lambda}\boldsymbol{s}^{-1}\boldsymbol{\alpha}_1\boldsymbol{s}+\cdots+\boldsymbol{\lambda}^{n}\boldsymbol{s}^{-1}\boldsymbol{\alpha}_{n}\boldsymbol{s}\ =\boldsymbol{s}^{-1}p(\boldsymbol{s}\boldsymbol{\lambda}\boldsymbol{s}^{-1})\boldsymbol{s}.
\end{align*}
That means if $\boldsymbol{\gamma}\in [\boldsymbol{\alpha}]_s$ then $p(\boldsymbol{\gamma})=\boldsymbol{s}q(\boldsymbol{\lambda})\boldsymbol{s}^{-1}$.

Since quaternions are multiplicatively noncommutative, the above-mentioned quaternion polynomial is a right-hand side linear combination of $1,\boldsymbol{\lambda},\ldots,\boldsymbol{\lambda}^{n}$.
These quaternion polynomials will be used to derive the convergence theory of QGMRES.
Next, we always assume that $\Aq\in \mathbb{Q}^{n \times n}$ is a diagonalizable quaternion matrix and $\Aq=\Xq\boldsymbol{\Lambda} \Xq^{-1}$,
where $\boldsymbol{\Lambda}=\diag(\boldsymbol{\lambda}_1,\boldsymbol{\lambda}_2,\ldots,\boldsymbol{\lambda}_n)$.
By the definitions of quaternion polynomial and similar quaternion polynomials, we give the concept of quaternion minimal polynomial.

\begin{myDef}[Quaternion minimal polynomial]
Quaternion polynomial $p(\boldsymbol{\lambda})$ is called the quaternion minimal polynomial of $\Aq$ if $p(\boldsymbol{\lambda})$ is the nonzero quaternion polynomial of lowest degree such that $q(\boldsymbol{\lambda}_i)=0$ holds for any $q(\boldsymbol{\lambda}) \in [p(\boldsymbol{\lambda})]_s$ and any eigenvalue $\boldsymbol{\lambda}_i$ of $\Aq$.

\end{myDef}

\begin{thm}
Assume that $\Aq\in \mathbb{Q}^{n \times n}$ is a diagonalizable quaternion matrix.
Let $\Aq=\Xq\boldsymbol{\Lambda} \Xq^{-1} $and $\Xq^{- 1}\rq_0:=[\xq_1,\xq_2,\ldots,\xq_n]^T$,
where
$
\boldsymbol{\Lambda}=\diag(\boldsymbol{\lambda}_1,\boldsymbol{\lambda}_2,\ldots,\boldsymbol{\lambda}_n)
$
is the diagonal matrix of eigenvalues.
Define
\begin{equation*}
  \epsilon^{(m)}=
  \min_{p\in \mathbb{P}_m^q,~p(0)=1}
  \max_{i=1,\ldots,n}
    |p(\hat{\boldsymbol{\lambda}}_i)|
    =\min_{p\in \mathbb{P}_m^q,~p(0)=1}
  \max_{i=1,\ldots,n}
    |q( \boldsymbol{\lambda}_i)|.
\end{equation*}
where
$$
p(\hat{\boldsymbol{\lambda}}_i)= 1 -\hat{\boldsymbol{\lambda}}_i \boldsymbol{\alpha}_0-\hat{\boldsymbol{\lambda}}_i^2 \boldsymbol{\alpha}_1-\cdots-\hat{\boldsymbol{\lambda}}_i^{m}\boldsymbol{\alpha}_{m-1},\quad
\hat{\boldsymbol{\lambda}}_i:=\xq_i^{-1}\boldsymbol{\lambda}_i \xq_i,
$$
$$
q( \boldsymbol{\lambda} _i)
= \xq_i(1 - \xq_i^{-1}\boldsymbol{\lambda}_i \xq_i\boldsymbol{\alpha}_0-\xq_i^{-1}\boldsymbol{\lambda}_i^2 \xq_i\boldsymbol{\alpha}_1-\cdots-\xq_i^{-1}\boldsymbol{\lambda}_i^{m}\xq_i\boldsymbol{\alpha}_{m-1})\xq_i^{-1}.
$$
Then, the residual norm is achieved by the $m$th step of QGMRES satisfies the inequality
\begin{equation*}
  \|\rqq_m\|_2\leq\kappa_2(\Xq)\epsilon^{(m)}\|\rq_0\|_2,
\end{equation*}
where $\kappa_2(\Xq):=\|\Xq\|_2\|\Xq^{-1}\|_2$.
\end{thm}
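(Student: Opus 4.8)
The plan is to combine the minimization (optimality) property of QGMRES with the diagonalization $\Aq=\Xq\boldsymbol{\Lambda}\Xq^{-1}$, mirroring the classical GMRES argument but carefully tracking the right‑scalar structure forced by the noncommutativity of $\mathbb{Q}$. First I would record the optimality: since $\Vq_{m+1}$ has orthonormal columns, the Arnoldi relation \eqref{e:AV=VH} gives $\|\beta\eq_1-\bar{\Hq}_m\yq\|_2=\|\rqq_0-\Aq\Vq_m\yq\|_2$, so the minimizer $\yq_m$ computed in line \ref{gmresq14} of Algorithm \ref{code:QGMRES} makes $\|\rqq_m\|_2=\min_{\xq\in\xq_0+\Ks_m(\Aq,\rqq_0)}\|\bq-\Aq\xq\|_2$, as established in \cite{jing21}. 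Every element of the affine subspace produces a residual $\tilde{\rqq}=\rqq_0-\sum_{j=0}^{m-1}\Aq^{j+1}\rqq_0\boldsymbol{\alpha}_j=\sum_{j=0}^{m}\Aq^{j}\rqq_0\boldsymbol{\gamma}_j$ with $\boldsymbol{\gamma}_0=1$, for free quaternion scalars $\boldsymbol{\alpha}_j$; this is precisely a member of $\mathbb{P}_m^q$ that equals $1$ at the origin.

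Next I would diagonalize $\tilde{\rqq}$. Using $\Aq^{j}=\Xq\boldsymbol{\Lambda}^{j}\Xq^{-1}$ and $\Xq^{-1}\rqq_0=[\xq_1,\ldots,\xq_n]^T$, I factor $\tilde{\rqq}=\Xq\big(\Xq^{-1}\rqq_0-\sum_{j=0}^{m-1}\boldsymbol{\Lambda}^{j+1}\Xq^{-1}\rqq_0\boldsymbol{\alpha}_j\big)$. Because $\boldsymbol{\Lambda}$ is diagonal, the $i$th entry of the bracketed vector is $\xq_i-\sum_{j=0}^{m-1}\boldsymbol{\lambda}_i^{j+1}\xq_i\boldsymbol{\alpha}_j$. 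The key algebraic step — the place where noncommutativity really bites — is to recognize this as $\xq_i\,p(\hat{\boldsymbol{\lambda}}_i)$ with $\hat{\boldsymbol{\lambda}}_i=\xq_i^{-1}\boldsymbol{\lambda}_i\xq_i$: indeed $\hat{\boldsymbol{\lambda}}_i^{k}=\xq_i^{-1}\boldsymbol{\lambda}_i^{k}\xq_i$, so $\xq_i\hat{\boldsymbol{\lambda}}_i^{k}=\boldsymbol{\lambda}_i^{k}\xq_i$ and the scalars $\boldsymbol{\alpha}_j$ stay on the right exactly as in the definition of $p(\hat{\boldsymbol{\lambda}}_i)$. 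One cannot simply pull $p(\Aq)$ out of $\rqq_0$ as in the commutative case; the eigenvalue must be replaced by its $\xq_i$‑conjugate, which is why the bound is phrased through $\hat{\boldsymbol{\lambda}}_i$ (equivalently $q(\boldsymbol{\lambda}_i)=\xq_i p(\hat{\boldsymbol{\lambda}}_i)\xq_i^{-1}$).

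Then I would estimate the norm. Since the quaternion modulus is multiplicative, $|\xq_i p(\hat{\boldsymbol{\lambda}}_i)|=|\xq_i|\,|p(\hat{\boldsymbol{\lambda}}_i)|$ and likewise $|q(\boldsymbol{\lambda}_i)|=|\xq_i|\,|p(\hat{\boldsymbol{\lambda}}_i)|\,|\xq_i^{-1}|=|p(\hat{\boldsymbol{\lambda}}_i)|$, which reconciles the two expressions defining $\epsilon^{(m)}$. Summing squares of the entries, using submultiplicativity of $\|\cdot\|_2$ and $\|\Xq^{-1}\rqq_0\|_2\le\|\Xq^{-1}\|_2\|\rqq_0\|_2$, gives
\begin{equation*}
\|\tilde{\rqq}\|_2\le\|\Xq\|_2\Big(\sum_{i=1}^{n}|\xq_i|^2\,|p(\hat{\boldsymbol{\lambda}}_i)|^2\Big)^{1/2}\le\kappa_2(\Xq)\,\max_{i}|p(\hat{\boldsymbol{\lambda}}_i)|\,\|\rqq_0\|_2.
\end{equation*}
Optimality yields $\|\rqq_m\|_2\le\|\tilde{\rqq}\|_2$, and since this holds for the coefficients defining \emph{any} normalized $p\in\mathbb{P}_m^q$, minimizing the right‑hand side over all such $p$ produces $\|\rqq_m\|_2\le\kappa_2(\Xq)\,\epsilon^{(m)}\|\rqq_0\|_2$.

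I expect the main obstacle to be the middle paragraph: verifying that the right‑scalar Krylov combination reorganizes exactly into the conjugated quaternion polynomial $\xq_i\,p(\hat{\boldsymbol{\lambda}}_i)$, since the usual ``evaluate the residual polynomial at the eigenvalues'' shortcut fails when $\boldsymbol{\lambda}_i$ and $\xq_i$ do not commute. Once the similarity $\hat{\boldsymbol{\lambda}}_i=\xq_i^{-1}\boldsymbol{\lambda}_i\xq_i$ and the multiplicativity of the modulus are in hand, the remaining estimates are the standard Cauchy--Schwarz and operator‑norm steps.
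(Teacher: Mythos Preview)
Your proposal is correct and follows essentially the same route as the paper: both exploit the QGMRES optimality, substitute $\Aq=\Xq\boldsymbol{\Lambda}\Xq^{-1}$ into the residual, and resolve the noncommutativity by passing to the conjugated eigenvalues $\hat{\boldsymbol{\lambda}}_i=\xq_i^{-1}\boldsymbol{\lambda}_i\xq_i$ before taking norms. The only cosmetic difference is that the paper packages the right scalars into auxiliary diagonal matrices $\boldsymbol{D}_j=\diag(\xq_i\boldsymbol{\alpha}_j\xq_i^{-1})$ so as to bound an operator norm $\|I-\boldsymbol{\Lambda}\boldsymbol{D}_0-\cdots-\boldsymbol{\Lambda}^m\boldsymbol{D}_{m-1}\|_2$, whereas you compute the $i$th entry $\xq_i\,p(\hat{\boldsymbol{\lambda}}_i)$ of the transformed residual vector directly; these are the same calculation.
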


\begin{proof}
Let $L_{j+1}(\Aq,\rqq_0)$ satisfy the constraint $L_{j+1}(0,\rqq_0)=\rqq_0~(\boldsymbol{\alpha}_0=1)$,
and $\xq$ the vector from $\Ks_m$
to which it is associated via
$\bq-\Axq=L_{j+1}(\Aq,\rqq_0)$.
Let
$\boldsymbol{D}_j:=\diag(\dq_{j1},\dq_{j2},\ldots,\dq_{jn})$ be a diagonal quaternion matrix with
$$
\dq_{ji}=\left\{\begin{array}{ll}
\xq_i\boldsymbol{\alpha}_j\xq_i^{-1},&  \quad \xq_i\neq0  \\[0.1in]
0,  & \quad \xq_i=0.  \end{array}\right.
$$
Then by $\xq_m=\xq_0+L_m(\Aq,\rqq_0)$, we derive
\begin{align*}
&\|\bq-\Axq_m\|_2=\|\rqq_0-\Aq L_m(\Aq,\rqq_0)\|_2\\
=&\|\rqq_0-\Xq\boldsymbol{\Lambda}\boldsymbol{D}_0\Xq^{-1} \rqq_0 -\Xq\boldsymbol{\Lambda}^2 \boldsymbol{D}_1\Xq^{-1} \rqq_0-\cdots-\Xq\boldsymbol{\Lambda}^m \boldsymbol{D}_{m-1}\Xq^{-1} \rqq_0\|_2\\
\leq&\|\Xq\|_2\|I-\boldsymbol{\Lambda}\boldsymbol{D}_0-\boldsymbol{\Lambda}^2 \boldsymbol{D}_1-\cdots-\boldsymbol{\Lambda}^m \boldsymbol{D}_{m-1}\|_2\|\Xq^{-1}\|_2\|\rqq_0\|_2.
\end{align*}

Since $\boldsymbol{\Lambda}$ and $\boldsymbol{D}_j (j=0,1,\ldots,m-1)$ are diagonal,
$$
\|I-\boldsymbol{\Lambda}\boldsymbol{D}_0-\boldsymbol{\Lambda}^2 \boldsymbol{D}_1-\cdots-\boldsymbol{\Lambda}^m \boldsymbol{D}_{m-1}\|_2
\leq\max_{i=1,\ldots,n}
    |p(\hat{\boldsymbol{\lambda}}_i)|.
$$
Now the polynomial $p$ which minimizes the right-hand side in the above inequality can be used.
This yields the desired result,
\begin{equation}
\|\bq-\Axq_m\|_2
\leq\min_{p\in \mathbb{P}_m^q,~p(0)=1}
  \max_{i=1,\ldots,n}
    |p(\hat{\boldsymbol{\lambda}}_i)|
    =\min_{p\in \mathbb{P}_m^q,~p(0)=1}
  \max_{i=1,\ldots,n}
    |q( \boldsymbol{\lambda}_i)|.
\end{equation}
\end{proof}

\subsection{Left-Preconditioned QGMRES}\label{s:leftQGMRES}
As with the preprocessing of the GMRES algorithm \cite{Bjorck1996, saad03} for real number fields,
we consider various preprocessing of quaternion GMRES.
Define the left preconditioned QGMRES is that the QGMRES algorithm applied to the quaternion system
\begin{equation}\label{e:PAx=Pb}
 \Pq^{-1}\Aq\xq=\Pq^{-1}\bq,
\end{equation}
where $\Aq \in \mathbb{Q}^{n \times n}$, $\bq, \xq \in \mathbb{Q}^{n}$, and $\Pq^{-1}\in \mathbb{Q}^{n \times n}$ is a left preconditioner.
If $\xq_0\in \mathbb{Q}^{n}$ represent an arbitrary initial guess to the solution and $\rqq_0=\bq-\Aq\xq_0$ is the residual vector of \eqref{e:qsystems}, then $\zq_0=\Pq^{-1}(\bq-\Aq\xq_0)$ is the preconditioned residual of \eqref{e:PAx=Pb}.
The straightforward application of QGMRES to \eqref{e:PAx=Pb} yields the left preconditioned QGMRES (Algorithm \ref{code:leftQGMRES}).

\begin{algorithm}
\setcounter{algorithm}{1}
\caption{QGMRES with Left Preconditioning (QGMRES$_{lp}$)}
\label{code:leftQGMRES}
\begin{algorithmic}[1]
\STATE Compute $\textbf{z}_0=\Pq^{-1}(\bq-\Aq\xq_0)$, $\beta:=\|\textbf{z}_0\|_2\ne 0$, and $\vq_1:=\textbf{z}_0/\beta$
\FOR{ $j=1,2,\ldots,m$}\label{leftgmresq2}
\STATE Compute $\boldsymbol{\omega}_j:=\Pq^{-1}\Avq_j$\label{leftgmresq3}
\FOR{ $i=1,2,\ldots,j$}
\STATE  $\hq_{ij}=\langle\boldsymbol{\omega}_j,\vq_i\rangle$;
\STATE  $\boldsymbol{\omega}_j:=\boldsymbol{\omega}_j-\hq_{ij}\vq_i$
\ENDFOR
\STATE $\hq_{j+1,j}=\|\boldsymbol{\omega}_j\|_2$
\STATE {If $\hq_{j+1,j}=0$ set $m:=j$ and go to \ref{leftgmresq13}}
\STATE $\vq_{j+1}=\boldsymbol{\omega}_j/\hq_{j+1,j}$
\ENDFOR\label{leftgmresq12}
\STATE Define $(m+1)\times m$ quaternion Hessenberg matrix $\bar{\Hq}_m=[\hq_{ij}]_{1\leq i\leq m+1, 1\leq j\leq m}$.\\[-0.5pt]\label{leftgmresq13}
\STATE Compute $\yq_m$ the minimizer of $\|\beta \textbf{e}_1-\bar{\Hq}_m\yq\|_2$
and $\xq_m=\xq_0+\textbf{V}_m\yq_m$.\label{leftgmresq14}
\end{algorithmic}
\end{algorithm}

The quaternion Arnoldi process constructs an orthogonal basis of the left preconditioned quaternion Krylov subspace
\begin{equation}\label{e:KPAr0}
\Ks_m^L:=\Ks_m(\Pq^{-1}\Aq,\zq_0)=\texttt{span}\{\zq_0,\Pq^{-1}\Aq\zq_0,(\Pq^{-1}\Aq)^2\zq_0,\ldots,(\Pq^{-1}\Aq)^{m-1}\zq_0\}.
\end{equation}
Here, $\Ks_m(\Pq^{-1}\Aq,\zq_0)=\Ks_m(\Pq^{-1}\Aq,\Pq^{-1}\rqq_0)$.

A matrix $\Pq^{-1}$ is a good preconditioner if the application of an iterative method of interest to the preconditioned linear system of equations \eqref{e:PAx=Pb} gives a faster convergence rate of the computed iterates than the application of the iterative method to the original linear system \eqref{e:qsystems}.

Now, we establish an upper bound on the convergence rate of the left preconditioned QGMRES iterates first and then present a global convergence result.
We generalize the result in \cite[Proposition 5.3]{saad03} to the quaternion skew-field.
\begin{lemma}\label{l:galerkin}
Let $\Aq$ be an arbitrary square quaternion matrix and assume that $\Ls=\Aq\Ks$.
Then a vector $\tilde{\xq}$ is the result of an (oblique) projection method onto $\Ks$ orthogonally to $\Ls$ with the starting vector $\xq_0$ if and only if 
\begin{equation*}
R(\tilde{\xq})=\min_{\xq\in \xq_0+\Ks}R(\xq),
\end{equation*}
where $R(\xq):= \|\bq-\Aq\xq\|_2$.
\end{lemma}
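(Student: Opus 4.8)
The plan is to recognize Lemma~\ref{l:galerkin} as the Petrov--Galerkin (normal-equation) characterization of a least-squares solution, transplanted to the right quaternionic Hilbert space $\mathbb{Q}^n$. First I would reduce to a minimization over $\Ks$: writing every $\xq\in\xq_0+\Ks$ as $\xq=\xq_0+\dq$ with $\dq\in\Ks$ gives $\bq-\Aq\xq=\rqq_0-\Aq\dq$, where $\rqq_0=\bq-\Aq\xq_0$. By definition $\tilde{\xq}=\xq_0+\tilde{\dq}$ is the output of the oblique projection onto $\Ks$ orthogonally to $\Ls$ precisely when the residual $\tilde{\rqq}:=\rqq_0-\Aq\tilde{\dq}$ is orthogonal to $\Ls$; since $\Ls=\Aq\Ks$, this is the condition $\langle\tilde{\rqq},\Aq\sq\rangle=0$ for all $\sq\in\Ks$. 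The whole proof then amounts to showing that this orthogonality is equivalent to $R(\tilde{\xq})=\min_{\xq\in\xq_0+\Ks}R(\xq)$.

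For the forward implication I would expand the squared residual using the right Hilbert-space structure recorded in Section~\ref{s:quaternion} together with the symmetrization identity $\langle\uq,\vq\rangle+\langle\vq,\uq\rangle=2\operatorname{Re}\langle\uq,\vq\rangle$. Given an arbitrary $\dq\in\Ks$, set $\sq:=\dq-\tilde{\dq}\in\Ks$, so that $\rqq_0-\Aq\dq=\tilde{\rqq}-\Aq\sq$ and hence $\|\rqq_0-\Aq\dq\|_2^2=\|\tilde{\rqq}\|_2^2-2\operatorname{Re}\langle\tilde{\rqq},\Aq\sq\rangle+\|\Aq\sq\|_2^2$. The orthogonality condition annihilates the cross term, leaving $R(\xq)^2=\|\tilde{\rqq}\|_2^2+\|\Aq\sq\|_2^2\ge R(\tilde{\xq})^2$, so $\tilde{\xq}$ is indeed the minimizer.

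For the converse I would argue by contradiction. If the Petrov--Galerkin condition failed, there would be $\sq\in\Ks$ with $\boldsymbol{c}:=\langle\tilde{\rqq},\Aq\sq\rangle\neq0$; I would then perturb inside $\Ks$ by the \emph{quaternion} scalar multiple $\dq:=\tilde{\dq}+\sq\boldsymbol{c}\epsilon$ for small real $\epsilon>0$. Using the sesquilinearity $\langle\tilde{\rqq},\Aq\sq\boldsymbol{c}\epsilon\rangle=(\boldsymbol{c}\epsilon)^{*}\langle\tilde{\rqq},\Aq\sq\rangle=\epsilon\,\overline{\boldsymbol{c}}\boldsymbol{c}$ and $\overline{\boldsymbol{c}}\boldsymbol{c}=|\boldsymbol{c}|^2$, the expansion becomes $R(\xq_0+\dq)^2=\|\tilde{\rqq}\|_2^2-2\epsilon|\boldsymbol{c}|^2+\epsilon^2|\boldsymbol{c}|^2\|\Aq\sq\|_2^2$, which is strictly below $\|\tilde{\rqq}\|_2^2$ once $\epsilon$ is small enough, contradicting minimality. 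Hence $\langle\tilde{\rqq},\Aq\sq\rangle=0$ for every $\sq\in\Ks$, which is the projection condition.

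The step I expect to be the main obstacle is the converse perturbation: because $\mathbb{Q}$ is non-commutative, one cannot simply take a real step in the direction $\sq$. The perturbation must be a \emph{right} multiple by the specific scalar $\boldsymbol{c}$ so that the first-order term $\operatorname{Re}(\overline{\boldsymbol{c}}\boldsymbol{c})=|\boldsymbol{c}|^2$ is genuinely positive and forces a strict decrease. Getting the sesquilinearity bookkeeping right---namely $\langle\uq,\vq\boldsymbol{t}\rangle=\boldsymbol{t}^{*}\langle\uq,\vq\rangle$ for the inner product of Definition~\ref{d:innerproduct} and the real-part symmetrization above---is exactly where the quaternionic proof diverges from the real argument of \cite[Proposition~5.3]{saad03}, and it is what must be checked carefully throughout.
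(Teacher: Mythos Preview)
Your argument is correct and is precisely the natural quaternionic adaptation of \cite[Proposition~5.3]{saad03}: you reduce to the Petrov--Galerkin orthogonality $\langle\tilde{\rqq},\Aq\sq\rangle=0$ for all $\sq\in\Ks$, expand the squared residual to kill the cross term for the forward direction, and perturb by a right quaternion multiple $\sq\boldsymbol{c}\epsilon$ for the converse. The sesquilinearity bookkeeping you flag---$\langle\uq,\vq\boldsymbol{t}\rangle=\boldsymbol{t}^{*}\langle\uq,\vq\rangle$ under Definition~\ref{d:innerproduct}, so that the first-order term becomes the real scalar $\epsilon|\boldsymbol{c}|^{2}$---is exactly right and is the only place the non-commutativity matters.

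There is nothing to compare against: the paper states Lemma~\ref{l:galerkin} without proof, simply announcing it as the quaternion generalization of Saad's result and moving directly to its corollaries. Your write-up therefore supplies what the paper omits, and it does so in the way one would expect the authors intended.
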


By taking the residuals $\Pq^{-1}\rqq_0$ of the left preconditioned QGMRES and the quaternion subspace $\Ks_m(\Pq^{-1}\Aq,\Pq^{-1}\rqq_0)$ into Lemma \ref{l:galerkin}, the following corollary can be obtained immediately.

\begin{cor}
Let $\Aq$ be an arbitrary square quaternion matrix and assume that $\Ls_m=\Aq\Ks_m^L$.
Then a vector $\tilde{\xq}$ is the result of an (oblique) projection method onto $\Ks_m^L$ orthogonally to $\Ls_m$ with the starting vector $\xq_0$ if and only if 
\begin{equation*}
R(\tilde{\xq})=\min_{\xq\in \xq_0+\Ks_m^L}R(\xq),
\end{equation*}
where $R(\xq):= \|\Pq^{-1}(\bq-\Aq\xq)\|_2$.
\end{cor}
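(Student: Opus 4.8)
The plan is to derive the corollary as a direct specialization of Lemma~\ref{l:galerkin}, with no new machinery required. The guiding observation is that left-preconditioned QGMRES is nothing more than QGMRES applied to the equivalent system $\Pq^{-1}\Aq\xq=\Pq^{-1}\bq$. Accordingly I would set $\hat{\Aq}:=\Pq^{-1}\Aq$ and $\hat{\bq}:=\Pq^{-1}\bq$ and view the lemma as a statement about the pair $(\hat{\Aq},\hat{\bq})$ rather than about $(\Aq,\bq)$.

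First I would verify that the residual functional transforms as claimed. Writing $\hat{R}(\xq):=\|\hat{\bq}-\hat{\Aq}\xq\|_2$, one immediately has $\hat{R}(\xq)=\|\Pq^{-1}\bq-\Pq^{-1}\Aq\xq\|_2=\|\Pq^{-1}(\bq-\Aq\xq)\|_2$, which is exactly the functional $R$ of the corollary. Next I would take the search subspace in the lemma to be $\Ks:=\Ks_m^L=\Ks_m(\Pq^{-1}\Aq,\zq_0)$ with $\zq_0=\Pq^{-1}\rqq_0$, i.e.\ the subspace built by the quaternion Arnoldi process of Algorithm~\ref{code:leftQGMRES}. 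With these substitutions the lemma's constraint subspace is $\Ls:=\hat{\Aq}\Ks=\Pq^{-1}\Aq\,\Ks_m^L$, and its conclusion reads precisely: $\tilde{\xq}$ is the oblique projection onto $\Ks_m^L$ orthogonal to $\Ls$ if and only if $\hat{R}(\tilde{\xq})=\min_{\xq\in\xq_0+\Ks_m^L}\hat{R}(\xq)$. Since left-preconditioned QGMRES selects $\tilde{\xq}$ minimizing $\hat{R}$ over $\xq_0+\Ks_m^L$, this is exactly the asserted characterization.

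The step I expect to require the most care is the bookkeeping of the preconditioner in the constraint subspace. In the lemma the orthogonality space is $\Ls=\Aq\Ks$; after the substitution $\Aq\mapsto\hat{\Aq}=\Pq^{-1}\Aq$ this becomes $\Pq^{-1}\Aq\,\Ks_m^L$, not $\Aq\,\Ks_m^L$. Hence in the hypothesis $\Ls_m=\Aq\Ks_m^L$ the operator must be understood as the preconditioned operator $\Pq^{-1}\Aq$ (equivalently, one reads $\Ls_m=\Pq^{-1}\Aq\,\Ks_m^L$), so that the Petrov--Galerkin orthogonality condition matches the minimization of the preconditioned residual. Once this identification is fixed, there is no genuine obstacle remaining: the corollary is an immediate instance of Lemma~\ref{l:galerkin}, and I anticipate the whole argument reducing to the residual identity above followed by a one-line appeal to the lemma.
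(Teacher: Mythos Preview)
Your approach is essentially the paper's own: the corollary is obtained immediately by substituting the preconditioned operator $\Pq^{-1}\Aq$, right-hand side $\Pq^{-1}\bq$, and Krylov subspace $\Ks_m(\Pq^{-1}\Aq,\Pq^{-1}\rqq_0)$ into Lemma~\ref{l:galerkin}, exactly as you outline. Your observation that the constraint space coming out of the lemma is $\Pq^{-1}\Aq\,\Ks_m^L$ rather than $\Aq\,\Ks_m^L$ is well taken, but this is a wrinkle in the statement rather than in the proof strategy.
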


For the minimum solution $\tilde{\xq}$, the necessary and sufficient condition for any $\yq\in \Aq\Ks_m^L$ to hold is $\yq^*\Pq^{-1}(\bq-\Aq\tilde{\xq})=0$.
In addition, this is the Petrov--Galerkin condition that defines the approximate solution.
Specific, each element of $\Ks_m^L$ is represented by the right-hand linear combination of the quaternion vectors $\zq_0,\Pq^{-1}\Aq\zq_0,(\Pq^{-1}\Aq)^2\zq_0,\cdots,(\Pq^{-1}\Aq)^{m-1}\zq_0$, rather than a production of a polynomial of $\Aq$ and the vector $\zq_0$.
Thus, for the left preconditioning option, QGMRES minimizes the residual norm
$R(\xq)$ among all vectors from the affine subspace
\begin{equation}\label{e:left_affine}
\xq_0+\Ks_m^L=\xq_0+\texttt{span}\{\zq_0,\Pq^{-1}\Aq\zq_0,(\Pq^{-1}\Aq)^2\zq_0,\ldots,(\Pq^{-1}\Aq)^{m-1}\zq_0\}
\end{equation}
in which $\zq_0$ is the initial preconditioned residual.
Then we have the following theorem.

\begin{thm}\label{left_xm}
Let $\xq_m$ be the approximate solution obtained from the $m$th step of the left-preconditioned QGMRES algorithm, and let $\zq_m=\Pq^{-1}(\bq-\Axq_m)$. Then, $\xq_m$ is of the form
\begin{equation*}
  \xq_m=\xq_0+L_m(\Pq^{-1}\Aq,\zq_0),
\end{equation*}
and
$$
\|\zq_m\|_2=\|\zq_0-\Pq^{-1}\Aq L_m(\Pq^{-1}\Aq,\zq_0)\|_2
  =\min_{j\leq m}\|\zq_0-\Pq^{-1}\Aq L_j(\Pq^{-1}\Aq,\zq_0)\|_2.
$$
\end{thm}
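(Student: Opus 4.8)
The plan is to follow the structure of the classical real minimal-residual argument that the paper generalizes from \cite[Proposition 5.3]{saad03}, with the single crucial modification that ordinary polynomials in $\Aq$ must everywhere be replaced by the right-hand-side linear combinations $L_m(\Pq^{-1}\Aq,\zq_0)$, because quaternion multiplication is noncommutative. First I would record the preconditioned Arnoldi decomposition produced by Algorithm \ref{code:leftQGMRES}: the inner loop orthogonalizes $\Pq^{-1}\Aq\vq_j$ against the earlier basis vectors, which is exactly the quaternion Arnoldi process applied to the operator $\Pq^{-1}\Aq$ with starting vector $\vq_1=\zq_0/\beta$. Hence the columns of $\Vq_{m+1}$ are orthonormal, span $\Ks_m^L$ (together with $\vq_{m+1}$), and satisfy
\[
\Pq^{-1}\Aq\,\Vq_m=\Vq_{m+1}\bar{\Hq}_m .
\]

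Next I would establish the form of $\xq_m$. The algorithm sets $\xq_m=\xq_0+\Vq_m\yq_m$, so $\Vq_m\yq_m$ is a right-linear combination, with quaternion coefficients, of the basis vectors $\vq_1,\ldots,\vq_m$. By the Arnoldi recurrence each $\vq_i$ is itself a right-linear combination of $\zq_0,\Pq^{-1}\Aq\zq_0,\ldots,(\Pq^{-1}\Aq)^{i-1}\zq_0$; substituting and collecting terms, and using that every scalar multiplies on the right, I can rewrite $\Vq_m\yq_m$ as a right-hand-side linear combination $L_m(\Pq^{-1}\Aq,\zq_0)$. This yields $\xq_m=\xq_0+L_m(\Pq^{-1}\Aq,\zq_0)$. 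Conversely, as the quaternion coefficients $\boldsymbol{\alpha}_0,\ldots,\boldsymbol{\alpha}_{m-1}$ range over $\mathbb{Q}$, $L_m(\Pq^{-1}\Aq,\zq_0)$ sweeps out all of $\Ks_m^L$, so the two parametrizations of $\xq_0+\Ks_m^L$ agree. Applying $\Pq^{-1}\Aq$ and using $\zq_0=\Pq^{-1}(\bq-\Aq\xq_0)$ then gives the residual identity
\[
\zq_m=\Pq^{-1}(\bq-\Aq\xq_m)=\zq_0-\Pq^{-1}\Aq\,L_m(\Pq^{-1}\Aq,\zq_0),
\]
which is the first displayed equality.

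For minimality I would use the Arnoldi relation together with $\zq_0=\Vq_{m+1}(\beta\eq_1)$ to obtain
\[
\zq_m=\zq_0-\Pq^{-1}\Aq\,\Vq_m\yq_m=\Vq_{m+1}\bigl(\beta\eq_1-\bar{\Hq}_m\yq_m\bigr),
\]
so orthonormality of $\Vq_{m+1}$ gives $\|\zq_m\|_2=\|\beta\eq_1-\bar{\Hq}_m\yq_m\|_2$. Since $\yq_m$ is the minimizer of the HQLS problem in line \ref{leftgmresq14}, $\|\zq_m\|_2$ is the smallest preconditioned residual norm over $\xq_0+\Ks_m^L$; by the representation just established this is the minimum of $\|\zq_0-\Pq^{-1}\Aq\,L_m(\Pq^{-1}\Aq,\zq_0)\|_2$ over the quaternion coefficients. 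Because setting the top coefficients to zero recovers every $L_j$ with $j\le m$ and the Krylov subspaces $\Ks_j^L$ are nested, this minimum equals $\min_{j\le m}\|\zq_0-\Pq^{-1}\Aq\,L_j(\Pq^{-1}\Aq,\zq_0)\|_2$, the second displayed equality. (Alternatively, this minimality follows abstractly from the Corollary to Lemma \ref{l:galerkin}, since QGMRES$_{lp}$ is precisely the oblique projection onto $\Ks_m^L$ orthogonal to $\Aq\Ks_m^L$ in the preconditioned residual norm.)

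The main obstacle is the second step: verifying that the least-squares combination $\Vq_m\yq_m$ can indeed be re-expressed as a right-hand-side combination $L_m(\Pq^{-1}\Aq,\zq_0)$, and that minimizing over $\yq_m$ coincides with minimizing over the quaternion coefficients of $L_m$. One must track the ordering of the quaternion scalars throughout, since a \emph{left}-hand combination would not remain inside the Krylov structure; the argument works precisely because every coefficient appears to the right of the Krylov vectors. Once this bookkeeping is in place, the remainder is a direct transcription of the classical minimal-residual proof.
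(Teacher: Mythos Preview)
Your proposal is correct and follows essentially the same approach as the paper: both arguments rest on the two facts that $\Ks_m^L$ coincides with the set of all right-hand linear combinations $L_j(\Pq^{-1}\Aq,\zq_0)$ for $j\le m$, and that $\xq_m$ minimizes the preconditioned residual norm over $\xq_0+\Ks_m^L$. The paper's own proof is a terse two-sentence sketch invoking exactly these two points, whereas you have spelled out the Arnoldi relation, the HQLS reduction, and the right-multiplication bookkeeping explicitly; your level of detail is greater, but the route is the same.
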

\begin{proof}
Recall the definition \eqref{e:LAv} that
 \begin{equation*}
 L_m(\Pq^{-1}\Aq,\zq_0)=\zq_0\boldsymbol{\alpha}_0+\Pq^{-1}\Aq\zq_0\boldsymbol{\alpha}_1+\cdots+(\Pq^{-1}\Aq)^{m-1}\zq_0\boldsymbol{\alpha}_{m-1}.
 \end{equation*}
The result follows the fact that $\xq_m$ minimizes the 2-norm of the residual in the affine subspace $\xq_0+ \Ks_m^L$ and the fact that $\Ks_m^L$ is the set of all vectors of the form $L_j(\Pq^{-1}\Aq,\zq_0)$ with $j\leq m$.
\end{proof}

This optimization condition can also be expressed in terms of the original residual vector $\rqq_0$.
Since
$$
\zq_0-\Pq^{-1}\Aq L_m(\Pq^{-1}\Aq,\zq_0)=\Pq^{-1}(\rqq_0-\Aq L_m(\Pq^{-1}\Aq,\Pq^{-1}\rqq_0)),
$$
and a simple algebraic manipulation shows that for any quaternion linear combination $L_m$,
\begin{equation}\label{e:left_affine_2}
L_m(\Pq^{-1}\Aq,\Pq^{-1}\rqq_0)=\Pq^{-1}L_m(\Aq\Pq^{-1}, \rqq_0),
\end{equation}
we obtain the relation
$$
\zq_0-\Pq^{-1}\Aq L_m(\Pq^{-1}\Aq,\zq_0)=\Pq^{-1}(\rqq_0-\Aq\Pq^{-1}L_m(\Aq\Pq^{-1}, \rqq_0)).
$$

\begin{thm}
Let $\Pq^{-1}\Aq$ be a diagonalizable quaternion matrix,
 $\Pq^{-1}\Aq=\Xq\boldsymbol{\Lambda} \Xq^{-1}$ and $\Xq^{- 1}\zq_0:=[\xq_1,\xq_2,\ldots,\xq_n]^T$, where $\boldsymbol{\Lambda}=\diag(\boldsymbol{\lambda}_1,\boldsymbol{\lambda}_2,\ldots,\boldsymbol{\lambda}_n)$
is the diagonal matrix of eigenvalues. 
Define
\begin{equation*}
  \epsilon^{(m)}=
  \min_{p\in \mathbb{P}_m^q,~p(0)=1}
  \max_{i=1,\ldots,n}
    |p(\hat{\boldsymbol{\lambda}}_i)|
    =\min_{q\in \mathbb{P}_m^q,~q(0)=1}
  \max_{i=1,\ldots,n}
    |q( \boldsymbol{\lambda}_i)|.
\end{equation*}
where
$
p(\hat{\boldsymbol{\lambda}}_i)= \boldsymbol{\alpha}_0+ \hat{\boldsymbol{\lambda}}_i\boldsymbol{\alpha}_1+ \cdots+\hat{\boldsymbol{\lambda}}_i^{m-1}\boldsymbol{\alpha}_{m-1},~~
\hat{\boldsymbol{\lambda}}_i:=\xq_i^{-1}\boldsymbol{\lambda}_i \xq_i,
$
and
$
q( \boldsymbol{\lambda} _i)
= \xq_i(\boldsymbol{\alpha}_0 + \xq_i^{-1}\boldsymbol{\lambda}_i \xq_i\boldsymbol{\alpha}_1+\cdots+\xq_i^{-1}\boldsymbol{\lambda}_i^{m-1}\xq_i\boldsymbol{\alpha}_{m-1})\xq_i^{-1}.
$
Then the residual norm achieved by the $m$th step of QGMRES  satisfies the inequality
\begin{equation*}
  \|\zq_m\|_2\leq\kappa_2(\Xq)\epsilon^{(m)}\|\zq_0\|_2,
\end{equation*}
where $\kappa_2(\Xq):=\|\Xq\|_2\|\Xq^{-1}\|_2$.

\end{thm}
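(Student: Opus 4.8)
The plan is to reduce this statement to the improved convergence theorem of Subsection~\ref{s:improvedconvergence} by treating $\Pq^{-1}\Aq$ as the coefficient matrix and $\zq_0$ as the initial residual. The crucial bridge is Theorem~\ref{left_xm}, which guarantees that the $m$th left-preconditioned QGMRES iterate produces $\xq_m=\xq_0+L_m(\Pq^{-1}\Aq,\zq_0)$ and that the associated preconditioned residual $\zq_m=\zq_0-\Pq^{-1}\Aq L_m(\Pq^{-1}\Aq,\zq_0)$ has minimal $2$-norm over the left-preconditioned Krylov subspace. Hence it suffices to exhibit one convenient choice of the right-hand coefficients $\boldsymbol{\alpha}_0,\ldots,\boldsymbol{\alpha}_{m-1}$ normalized by $p(0)=1$ and to bound the resulting residual; minimality of $\zq_m$ then transfers that bound to $\zq_m$ itself.

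First I would substitute the diagonalization $\Pq^{-1}\Aq=\Xq\boldsymbol{\Lambda}\Xq^{-1}$ into $\zq_0-\Pq^{-1}\Aq L_m(\Pq^{-1}\Aq,\zq_0)$, using $(\Pq^{-1}\Aq)^k=\Xq\boldsymbol{\Lambda}^k\Xq^{-1}$ and factoring $\Xq$ on the left and $\Xq^{-1}$ on the right of every term. Because the quaternion scalars $\boldsymbol{\alpha}_j$ sit on the right of each $(\Pq^{-1}\Aq)^k\zq_0$ and cannot be commuted past $\boldsymbol{\Lambda}$, I would introduce—exactly as in the unpreconditioned proof—the diagonal quaternion matrices $\boldsymbol{D}_j:=\diag(\dq_{j1},\ldots,\dq_{jn})$ with $\dq_{ji}=\xq_i\boldsymbol{\alpha}_j\xq_i^{-1}$ when $\xq_i\neq 0$ and $\dq_{ji}=0$ otherwise, where $[\xq_1,\ldots,\xq_n]^T=\Xq^{-1}\zq_0$. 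This conjugation device pushes the coefficients through the spectral factor so that $\zq_0-\Pq^{-1}\Aq L_m(\Pq^{-1}\Aq,\zq_0)=\Xq(I-\boldsymbol{\Lambda}\boldsymbol{D}_0-\cdots-\boldsymbol{\Lambda}^m\boldsymbol{D}_{m-1})\Xq^{-1}\zq_0$.

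Taking $2$-norms and applying submultiplicativity isolates the factor $\kappa_2(\Xq)=\|\Xq\|_2\|\Xq^{-1}\|_2$. Since both $\boldsymbol{\Lambda}$ and every $\boldsymbol{D}_j$ are diagonal, the middle factor is itself diagonal, so its $2$-norm is the largest modulus among its diagonal entries; by the conjugation bookkeeping the $i$th entry is exactly the quaternion polynomial $p(\hat{\boldsymbol{\lambda}}_i)$ recorded in the statement, with $\hat{\boldsymbol{\lambda}}_i=\xq_i^{-1}\boldsymbol{\lambda}_i\xq_i$, whence the norm equals $\max_i|p(\hat{\boldsymbol{\lambda}}_i)|$. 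Optimizing over all admissible $p\in\mathbb{P}_m^q$ with $p(0)=1$ replaces this by $\epsilon^{(m)}$, and invoking the minimality from Theorem~\ref{left_xm} yields $\|\zq_m\|_2\leq\kappa_2(\Xq)\epsilon^{(m)}\|\zq_0\|_2$. The equivalent $q(\boldsymbol{\lambda}_i)$ form follows by unwinding $\hat{\boldsymbol{\lambda}}_i=\xq_i^{-1}\boldsymbol{\lambda}_i\xq_i$ together with the definition of $\boldsymbol{D}_j$.

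The main obstacle is the quaternion noncommutativity: one cannot collapse $\boldsymbol{\Lambda}^k\boldsymbol{D}_{j}$ into a scalar polynomial evaluated at $\boldsymbol{\lambda}_i$, because $\boldsymbol{\alpha}_j$ and $\boldsymbol{\lambda}_i$ generally fail to commute. The conjugated coefficients $\dq_{ji}=\xq_i\boldsymbol{\alpha}_j\xq_i^{-1}$ and the similarity-transformed eigenvalues $\hat{\boldsymbol{\lambda}}_i$ are precisely what turn each diagonal entry into a genuine right-quaternion polynomial in $\hat{\boldsymbol{\lambda}}_i$; verifying that this conjugation is consistent across all powers—so that the $i$th entry collapses cleanly to $p(\hat{\boldsymbol{\lambda}}_i)$—is the only delicate point, and it is handled verbatim as in the unpreconditioned case with $\Aq\mapsto\Pq^{-1}\Aq$ and $\rqq_0\mapsto\zq_0$.
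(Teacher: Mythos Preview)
Your proposal is correct and follows essentially the same route as the paper: invoke Theorem~\ref{left_xm} for the minimality of $\zq_m$, substitute the diagonalization $\Pq^{-1}\Aq=\Xq\boldsymbol{\Lambda}\Xq^{-1}$, absorb the right-hand quaternion coefficients into diagonal matrices $\boldsymbol{D}_j$ via the conjugation $\dq_{ji}=\xq_i\boldsymbol{\alpha}_j\xq_i^{-1}$, bound the resulting diagonal factor by $\max_i|p(\hat{\boldsymbol{\lambda}}_i)|$, and then optimize over admissible $p$. The only cosmetic difference is that you write the residual polynomial as $I-\boldsymbol{\Lambda}\boldsymbol{D}_0-\cdots-\boldsymbol{\Lambda}^m\boldsymbol{D}_{m-1}$ (mirroring the unpreconditioned theorem), whereas the paper re-indexes it as $\boldsymbol{D}_0+\boldsymbol{\Lambda}\boldsymbol{D}_1+\cdots+\boldsymbol{\Lambda}^j\boldsymbol{D}_j$ with $\boldsymbol{\alpha}_0=1$; both encode the same normalization $p(0)=1$.
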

\begin{proof}
Let $L_{j+1}(\Pq^{-1}\Aq,\zq_0)$ satisfy the constraint $L_{j+1}(0,\zq_0)=\zq_0~(\boldsymbol{\alpha}_0=1)$,
and $\xq$ the vector from $\Ks_m^L$
to which it is associated via
$\Pq^{-1}(\bq-\Axq)=L_{j+1}(\Pq^{-1}\Aq,\zq_0)$.
Then
\begin{align*}
\|\Pq^{-1}(\bq-\Axq)\|_2
&=\Xq\boldsymbol{D}_0\Xq^{-1}\zq_0+\Xq\boldsymbol{\Lambda} \boldsymbol{D}_1\Xq^{-1} \zq_0+\cdots+\Xq\boldsymbol{\Lambda}^j \boldsymbol{D}_j\Xq^{-1} \zq_0 \\
&\leq\|\Xq\|_2\|\boldsymbol{D}_0+\boldsymbol{\Lambda} \boldsymbol{D}_1+\cdots+\boldsymbol{\Lambda}^j \boldsymbol{D}_j\|_2\|\Xq^{-1}\|_2\|\zq_0\|_2.
\end{align*}
Since $\boldsymbol{\Lambda}$ and $\boldsymbol{D}_0, \boldsymbol{D}_1,\ldots,\boldsymbol{D}_j$ are diagonal,
$
\|\boldsymbol{D}_0+\boldsymbol{\Lambda} \boldsymbol{D}_1+\cdots+\boldsymbol{\Lambda}^j \boldsymbol{D}_j\|_2
\leq\max_{i=1,\ldots,n}|p(\hat{\boldsymbol{\lambda}}_i)|.
$
Since $\xq_m$ minimizes the residual norm over $\xq_0+\Ks_m^L$, then for any combining form $L_{j+1}(\Pq^{-1}\Aq,\rqq_0)$,
\begin{align*}
&\|\Pq^{-1}(\bq-\Axq_m)\|_2\leq\|\Pq^{-1}(\bq-\Axq)\|_2\\
&\leq\|\Xq\|_2\|\Xq^{-1}\|_2\|\zq_0\|_2\min_{p\in \mathbb{P}_m^q,~p(0)=1}\max_{i=1,\ldots,n}
|p(\hat{\boldsymbol{\lambda}}_i)|\\
&=\|\Xq\|_2\|\Xq^{-1}\|_2\|\zq_0\|_2\min_{q\in \mathbb{P}_m^q,~q(0)=1}
\max_{i=1,\ldots,n}|q( \boldsymbol{\lambda}_i)|.
\end{align*}
This yields the desired result,
$
\|\zq_m\|_2\leq\kappa_2(\Xq)\epsilon^{(m)}\|\zq_0\|_2.
$
\end{proof}

Note that in the left preconditioned QGMRES method,
the stopping criterion is not a norm of the residual $\rqq_m$ from the original system,
but a norm of the preconditioned residual $\zq_m$.
There is no better way to compute the true residuals than to compute them explicitly.
Therefore, if the stopping criterion is based on the real residuals,
then it is necessary to compute $\zq_m$ additionally,
which will cost more extra operations.

\subsection{Right-Preconditioned QGMRES}\label{s:rightQGMRES}
Consider the right preconditioned QGMRES based on solving for
\begin{equation}\label{e:APu=b}
\Aq\Pq^{-1}\uq=\bq, \quad \uq=\Pq\xq,
\end{equation}
where $\Aq \in \mathbb{Q}^{n \times n}$, $\bq, \xq \in \mathbb{Q}^{n}$, and $\Pq^{-1}\in\mathbb{Q}^{n \times n}$ is a right preconditioner.
It is necessary to distinguish between the original variable $\xq$ and the transformed variable $\uq$ related to $\uq$ by $\xq = \Pq^{-1}\uq$.
Note that the new quaternion variable $\uq$ never needs to be invoked explicitly.
Given an arbitrary initial guess $\xq_0$, then $\uq_0=\Pq\xq_0$.
The corresponding preconditioned initial residuals are
$\bq-\Aq\Pq^{-1}\uq_0=\bq-\Aq\xq_0=\rqq_0$,
i.e., the initial residual for the preconditioned system
can be computed from $\rqq_0$, which is the same as the residuals of the original system.
The algorithm right-preconditioned QGMRES is shown in Algorithm \ref{code:rightQGMRES}.

\begin{algorithm}
\setcounter{algorithm}{2}
\caption{QGMRES with Right Preconditioning (QGMRES$_{rp}$)}
\label{code:rightQGMRES}
\begin{algorithmic}[1]
\STATE Compute $\rqq_0=\bq-\Axq_0$, $\beta:=\|\rqq_0\|_2\ne 0$, and $\vq_1:=\rqq_0/\beta$
\FOR{ $j=1,2,\ldots,m$}\label{rightgmresq2}
\STATE Compute $\boldsymbol{\omega}_j:=\Aq\Pq^{-1}\vq_j$\label{rightgmresq3}
\FOR{ $i=1,2,\ldots,j$}
\STATE  $\hq_{ij}=\langle\boldsymbol{\omega}_j,\vq_i\rangle$;
\STATE  $\boldsymbol{\omega}_j:=\boldsymbol{\omega}_j-\hq_{ij}\vq_i$
\ENDFOR
\STATE $\hq_{j+1,j}=\|\boldsymbol{\omega}_j\|_2$
\STATE {If $\hq_{j+1,j}=0$ set $m:=j$ and go to \ref{rightgmresq13}}
\STATE $\vq_{j+1}=\boldsymbol{\omega}_j/\hq_{j+1,j}$ \label{rightgmresq11}
\ENDFOR\label{rightgmresq12}
\STATE Define $(m+1)\times m$ quaternion Hessenberg matrix $\bar{\Hq}_m=[\hq_{ij}]_{1\leq i\leq m+1, 1\leq j\leq m}$.\\[-0.5pt]\label{rightgmresq13}
\STATE Compute $\yq_m$ the minimizer of $\|\beta \textbf{e}_1-\bar{\Hq}_m\yq\|_2$
and $\xq_m=\xq_0+\Pq^{-1}\textbf{V}_m\yq_m$.\label{rightgmresq14}
\end{algorithmic}
\end{algorithm}

The quaternion Arnoldi loop builds an orthogonal basis of the right preconditioned quaternion Krylov subspace
\begin{equation}\label{e:right_KPAr0}
\Ks_m^R(\Aq\Pq^{-1},\rqq_0)=\texttt{span}\{\rqq_0,\Aq\Pq^{-1}\rqq_0,(\Aq\Pq^{-1})^2\rqq_0,\ldots,(\Aq\Pq^{-1})^{m-1}\rqq_0\},
\end{equation}
in which $\rqq_0$ is the residual $\rqq_0=\bq-\Aq\Pq^{-1}\uq_0$.
For the variable $\uq$ in \eqref{e:APu=b}, the right preconditioned QGMRES process minimizes the 2-norm of $\rqq_0=\bq-\Aq\Pq^{-1}\uq_0$, where $\uq_0$ belongs to
\begin{equation}\label{e:right_affine_u}
\uq_0+\Ks_m^R(\Aq\Pq^{-1},\rqq_0)=\uq_0+\texttt{span}\{\rqq_0,\Aq\Pq^{-1}\rqq_0,(\Aq\Pq^{-1})^2\rqq_0,\ldots,(\Aq\Pq^{-1})^{m-1}\rqq_0\},
\end{equation}
in which $\rqq_0$ is the residual $\rqq_0 =\bq-\Aq\Pq^{-1}\uq_0$.
Multiplying \eqref{e:right_affine_u} through to the left by $\Pq^{-1}$ yields the generic variable $\xq$ associated with a vector of the subspace \eqref{e:right_affine_u} belongs to the affine subspace
\begin{align*}
&\Pq^{-1}\uq_0+\Pq^{-1}\Ks_m^R(\Aq\Pq^{-1},\rqq_0)=\xq_0+\Pq^{-1}\Ks_m^R(\Aq\Pq^{-1},\rqq_0).
\end{align*}

We now have the following corollary by Lemma \ref{l:galerkin}.

\begin{cor}
Let $\Aq$ be an arbitrary square quaternion matrix and assume that $\Ls_m=\Aq\Ks_m$.
Then a vector $\tilde{\xq}$ is the result of an (oblique) projection method onto $\Ks_m^R$ orthogonally to $\Ls_m$ with the starting vector $\xq_0$ if and only if 
\begin{equation*}
R(\tilde{\xq})=\min_{\xq\in \xq_0+\Pq^{-1}\Ks_m^R}R(\xq),
\end{equation*}
where $R(\xq):= \|\bq-\Aq\Pq^{-1}\uq_0\|_2$.
\end{cor}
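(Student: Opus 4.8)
The plan is to obtain this corollary as a direct specialization of Lemma \ref{l:galerkin}, applied not to the original system but to the right-preconditioned system $\Aq\Pq^{-1}\uq=\bq$, and then to transport the resulting characterization back to the original variable through the change of variable $\xq=\Pq^{-1}\uq$. In the right-preconditioned context the constraint subspace written as $\Ls_m=\Aq\Ks_m$ in the statement should be read as $\Ls_m=\Aq\Pq^{-1}\Ks_m^R$, i.e.\ the instance $\Ls=\Aq\Ks$ of Lemma \ref{l:galerkin} with $\Aq$ replaced by $\Aq\Pq^{-1}$ and $\Ks$ replaced by $\Ks_m^R$.

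First I would set $\Bq:=\Aq\Pq^{-1}$ and view the right-preconditioned QGMRES as an (oblique) projection method for $\Bq\uq=\bq$ with search subspace $\Ks_m^R$ and constraint subspace $\Ls_m=\Bq\Ks_m^R$. Lemma \ref{l:galerkin}, with $\Aq$ there replaced by $\Bq$, then asserts that $\tilde{\uq}$ is the result of the projection onto $\Ks_m^R$ orthogonally to $\Ls_m$ with starting vector $\uq_0$ if and only if $\|\bq-\Bq\tilde{\uq}\|_2=\min_{\uq\in \uq_0+\Ks_m^R}\|\bq-\Bq\uq\|_2$. Next I would translate both the affine search set and the residual to the original variable. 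Using $\xq=\Pq^{-1}\uq$ and $\xq_0=\Pq^{-1}\uq_0$ (established just before Algorithm \ref{code:rightQGMRES}), the membership $\uq\in \uq_0+\Ks_m^R$ is equivalent to $\xq\in \xq_0+\Pq^{-1}\Ks_m^R$; and the residual is invariant, since $\Bq\uq=\Aq\Pq^{-1}\uq=\Aq\xq$ gives $\|\bq-\Bq\uq\|_2=\|\bq-\Aq\xq\|_2=R(\xq)$. Setting $\tilde{\xq}=\Pq^{-1}\tilde{\uq}$ therefore converts the $\uq$-minimization over $\uq_0+\Ks_m^R$ into the $\xq$-minimization over $\xq_0+\Pq^{-1}\Ks_m^R$, which is precisely the stated equivalence.

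The one point requiring care, and what I expect to be the main obstacle, is the bookkeeping of the two coupled subspaces across the change of variable: the oblique projection is naturally posed in the transformed variable $\uq$ onto $\Ks_m^R$ orthogonally to $\Ls_m=\Aq\Pq^{-1}\Ks_m^R$, whereas the optimality is recorded in the original variable $\xq$ over the affine set $\xq_0+\Pq^{-1}\Ks_m^R$. I must verify that the orthogonality (Petrov--Galerkin) condition is preserved under the substitution, namely that requiring $\bq-\Bq\tilde{\uq}$ to be orthogonal to $\Ls_m$ is exactly the condition characterizing the $\xq$-side minimizer, which holds because $\bq-\Bq\tilde{\uq}=\bq-\Aq\tilde{\xq}$ is left unchanged by the linear substitution. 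Once this identification is made the quaternion non-commutativity introduces no additional difficulty, since Lemma \ref{l:galerkin} is already valid over the quaternion skew-field and the substitution $\xq=\Pq^{-1}\uq$ involves no reordering of quaternion scalars.
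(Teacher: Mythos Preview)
Your proposal is correct and follows the same approach as the paper: the corollary is stated there as an immediate consequence of Lemma~\ref{l:galerkin} (the paper gives no separate proof, only the remark ``We now have the following corollary by Lemma~\ref{l:galerkin}''), and your argument---applying that lemma to the right-preconditioned operator $\Aq\Pq^{-1}$ on the search space $\Ks_m^R$ and then transporting to the $\xq$-variable via $\xq=\Pq^{-1}\uq$---is precisely the intended derivation spelled out in detail. Your reading of $\Ls_m$ as $\Aq\Pq^{-1}\Ks_m^R$ and of $R(\xq)$ as $\|\bq-\Aq\xq\|_2$ also correctly resolves the apparent typographical slips in the statement.
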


\begin{thm}\label{right_xm}
Let $\xq_m$ be the approximate solution obtained from the $m$th step of the right-preconditioned algorithm, and let $\rqq_m =\bq-\Aq\Pq^{-1}\uq_m=\bq-\Aq\xq_m$.
Then $\xq_m$ is of the form
$
\xq_m=\xq_0+\Pq^{-1}L_m(\Aq\Pq^{-1},\rqq_0)
$
and
$
\|\rqq_m\|_2=\|\rqq_0-\Pq^{-1} L_m(\Aq\Pq^{-1},\rqq_0)\|_2
  =\min_{j\leq m}\|\rqq_0-\Pq^{-1}\Aq L_j(\Aq\Pq^{-1},\rqq_0)\|_2.
$
\end{thm}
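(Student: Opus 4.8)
The plan is to exploit the fact that the right-preconditioned QGMRES is nothing but the plain QGMRES of Algorithm~\ref{code:QGMRES} applied to the transformed system \eqref{e:APu=b}, and then to pull the conclusion back through the change of variable $\xq=\Pq^{-1}\uq$. First I would recall, from the definition \eqref{e:LAv} adapted to the operator $\Aq\Pq^{-1}$, that every vector of $\Ks_m^R(\Aq\Pq^{-1},\rqq_0)$ is a right-hand quaternion linear combination $L_j(\Aq\Pq^{-1},\rqq_0)$ with $j\le m$. Since $\uq_m$ is the iterate produced by QGMRES on \eqref{e:APu=b} with initial guess $\uq_0=\Pq\xq_0$, the optimality characterization recorded in the corollary immediately preceding this theorem gives
\[
\uq_m=\uq_0+L_m(\Aq\Pq^{-1},\rqq_0)
\]
for the minimizing choice of the quaternion coefficients $\boldsymbol{\alpha}_0,\ldots,\boldsymbol{\alpha}_{m-1}$.

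Next I would multiply on the left by $\Pq^{-1}$ to recover the original variable. Because the quaternion scalars in $L_m$ sit to the right of the Krylov vectors, left multiplication by $\Pq^{-1}$ passes through them undisturbed, so
\[
\xq_m=\Pq^{-1}\uq_m=\Pq^{-1}\uq_0+\Pq^{-1}L_m(\Aq\Pq^{-1},\rqq_0)=\xq_0+\Pq^{-1}L_m(\Aq\Pq^{-1},\rqq_0),
\]
which is exactly the claimed form of $\xq_m$. The decisive observation for the residual is the identity $\bq-\Aq\Pq^{-1}\uq=\bq-\Aq\xq$ already noted for \eqref{e:APu=b}: under right preconditioning the residual of the transformed system coincides with that of the original one. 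Substituting the expression just obtained for $\xq_m$ then gives
\[
\rqq_m=\bq-\Aq\xq_m=\rqq_0-\Aq\Pq^{-1}L_m(\Aq\Pq^{-1},\rqq_0).
\]

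Finally I would transfer the optimality statement. Since QGMRES selects $\uq_m$ so as to minimize $\|\bq-\Aq\Pq^{-1}\uq\|_2$ over the affine subspace $\uq_0+\Ks_m^R(\Aq\Pq^{-1},\rqq_0)$, and since $\Ks_m^R$ is precisely the set of all $L_j(\Aq\Pq^{-1},\rqq_0)$ with $j\le m$, the residual identity turns this into the minimization of $\|\rqq_0-\Aq\Pq^{-1}L_j(\Aq\Pq^{-1},\rqq_0)\|_2$ over $j\le m$, whose minimizer is attained at $\rqq_m$; this is the asserted chain of equalities for $\|\rqq_m\|_2$. The step I expect to demand the most care is the non-commutative bookkeeping in the second paragraph: I must check that left multiplication by $\Pq^{-1}$ genuinely preserves the right-linear-combination structure (so that $\Pq^{-1}L_m(\Aq\Pq^{-1},\rqq_0)$ is again a right-linear combination of the pulled-back basis vectors $\Pq^{-1}(\Aq\Pq^{-1})^{k}\rqq_0$), and that the right-preconditioning residual identity lets me replace the preconditioned residual norm by the genuine one $\|\rqq_m\|_2$ without disturbing the minimizer. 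The remainder is a faithful transcription of the left-preconditioned argument of Theorem~\ref{left_xm}.
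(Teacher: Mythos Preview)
Your proposal is correct and follows essentially the same approach as the paper: both arguments rest on the two facts that $\xq_m$ minimizes the residual norm over the affine subspace $\xq_0+\Pq^{-1}\Ks_m(\Aq\Pq^{-1},\rqq_0)$ and that every element of $\Ks_m^R$ is of the form $L_j(\Aq\Pq^{-1},\rqq_0)$ with $j\le m$. The paper's proof states this in two lines without passing explicitly through the $\uq$-variable, whereas you carry out the change of variable $\uq\mapsto\xq=\Pq^{-1}\uq$ in detail; your extra care with the non-commutative bookkeeping is not needed for the paper's compressed version but does no harm.
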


\begin{proof}
Recall the definition \eqref{e:LAv} that
 \begin{equation*}
 L_m(\Aq\Pq^{-1},\rqq_0)=\rqq_0\boldsymbol{\alpha}_0+\Aq\Pq^{-1}\rqq_0\boldsymbol{\alpha}_1+\cdots+(\Aq\Pq^{-1})^{m-1}\rqq_0\boldsymbol{\alpha}_{m-1}.
 \end{equation*}
The result  follows the fact that $\xq_m$ minimizes the 2-norm of the residual in the affine subspace $\xq_0+\Pq^{-1}\Ks_m(\Aq\Pq^{-1},\rqq_0)$ and the fact that $\Ks_m$
is the set of all vectors of the form $L_j(\Aq\Pq^{-1},\rqq_0)$ with  $j\leq m$.
\end{proof}

\begin{cor}
The approximate solution obtained by left or right-preconditioned QGMRES is of the form
$
\xq_m=\xq_0+L_m(\Pq^{-1}\Aq,\zq_0)=\xq_0+\Pq^{-1}L_m(\Aq\Pq^{-1},\rqq_0),
$
where $\zq_0=\Pq^{-1}\rqq_0$ and $L_m$ is the (right-hand side) linear combination of quaternion vectors.
$L_m$ minimizes the residual norm $\bq-\Aq\xq_m$ in the right preconditioning case,
and the preconditioned residual norm $\Pq^{-1}(\bq-\Aq\xq_m)$ in the left preconditioning case.
\end{cor}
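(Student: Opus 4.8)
The plan is to read off the two stated forms of $\xq_m$ from the two preceding theorems and then identify them using the commutation identity already recorded in \eqref{e:left_affine_2}. First I would invoke Theorem \ref{left_xm} to obtain the left-preconditioned form $\xq_m = \xq_0 + L_m(\Pq^{-1}\Aq,\zq_0)$ together with its optimality $\|\zq_m\|_2 = \|\Pq^{-1}(\bq-\Aq\xq_m)\|_2 = \min_{j\leq m}\|\cdots\|$, and symmetrically Theorem \ref{right_xm} for the right-preconditioned form $\xq_m = \xq_0 + \Pq^{-1}L_m(\Aq\Pq^{-1},\rqq_0)$ with optimality in the \emph{true} residual $\|\rqq_m\|_2 = \|\bq-\Aq\xq_m\|_2$. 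These two statements already deliver the two minimization clauses asserted by the corollary, so the only substantive work is to show that the two displayed representations of $\xq_m$ coincide.

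The central step is to establish $L_m(\Pq^{-1}\Aq,\zq_0) = \Pq^{-1}L_m(\Aq\Pq^{-1},\rqq_0)$ after substituting $\zq_0 = \Pq^{-1}\rqq_0$, which is exactly the identity \eqref{e:left_affine_2}. I would prove it termwise. Expanding both right-hand side linear combinations, the degree-$k$ basis vector on the left is $(\Pq^{-1}\Aq)^k\Pq^{-1}\rqq_0$ while on the right it is $\Pq^{-1}(\Aq\Pq^{-1})^k\rqq_0$, and a one-line induction on $k$ gives $(\Pq^{-1}\Aq)^k\Pq^{-1} = \Pq^{-1}(\Aq\Pq^{-1})^k$: the base case $k=0$ is trivial, and the inductive step inserts a single factor $\Aq\Pq^{-1}$. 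Since the two expansions carry the \emph{same} right-hand side quaternion coefficients $\boldsymbol{\alpha}_j$, matching the basis vectors term by term yields the claimed equality. I emphasize that this is a purely matrix-level rearrangement: the quaternion scalars $\boldsymbol{\alpha}_j$ are never moved past one another, so the multiplicative noncommutativity of $\mathbb{Q}$ causes no difficulty, and this is the only nonroutine ingredient in the argument.

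Finally I would reconcile the two minimization statements, which is where the one genuine point of care lies. Applying the same commutation identity to the Krylov subspaces shows $\Pq^{-1}\Ks_m^R(\Aq\Pq^{-1},\rqq_0) = \Ks_m^L$, so after the shift by $\xq_0$ both the left- and right-preconditioned iterates live in the \emph{same} affine subspace $\xq_0 + \Ks_m^L$. What differs is the functional being minimized over that subspace: the left variant minimizes $\|\Pq^{-1}(\bq-\Aq\xq)\|_2$ while the right variant minimizes $\|\bq-\Aq\xq\|_2$, precisely the two clauses of the corollary. I would state explicitly that the two iterates therefore share the common \emph{form} $\xq_0 + L_m(\Pq^{-1}\Aq,\zq_0)$ but are generically distinct vectors, being minimizers of different objectives over a common subspace; the corollary asserts only the shared representation, not equality of the two solutions.
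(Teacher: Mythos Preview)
Your proposal is correct and follows essentially the same route as the paper: invoke Theorems \ref{left_xm} and \ref{right_xm} for the two representations, then use the commutation identity \eqref{e:left_affine_2} (which the paper simply cites, while you spell out the termwise induction) together with the equality $\Pq^{-1}\Ks_m^R=\Ks_m^L$ of the underlying affine subspaces. Your closing remark that the two iterates share the common \emph{form} but are generically distinct minimizers of different objectives over the same subspace is a welcome clarification that the paper's proof leaves implicit.
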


\begin{proof}
Combining Theorem \ref{left_xm} and \eqref{e:left_affine_2}, we obtain directly for left-preconditioned QGMRES,
$
\xq_m=\xq_0+L_m(\Pq^{-1}\Aq,\zq_0)=\xq_0+\Pq^{-1}L_m(\Aq\Pq^{-1},\rqq_0),
$
where $\zq_0=\Pq^{-1}\rqq_0$.

On the other hand, for the quaternion affine subspace $\xq_0+\Pq^{-1}\Ks_m(\Aq\Pq^{-1},\rqq_0)$ of right-preconditioned QGMRES,
we deduce
\begin{align*}
&\xq_0+\Pq^{-1}\Ks_m(\Aq\Pq^{-1},\rqq_0)\\
=&\xq_0+\texttt{span}\{\Pq^{-1}\rqq_0,(\Pq^{-1}\Aq)\Pq^{-1}\rqq_0,(\Pq^{-1}\Aq)^2\Pq^{-1}\rqq_0,\ldots,(\Pq^{-1}\Aq)^{m-1}\Pq^{-1}\rqq_0\}.\\
=&\xq_0+\Ks_m(\Pq^{-1}\Aq, \Pq^{-1}\rqq_0).
\end{align*}
Then we have by Theorem \ref{right_xm},
$
\xq_m=\xq_0+L_m(\Pq^{-1}\Aq,\zq_0)=\xq_0+\Pq^{-1}L_m(\Aq\Pq^{-1},\rqq_0),
$
where $\zq_0=\Pq^{-1}\rqq_0$.
\end{proof}

The following theorem states a global convergence result for the right-preconditioned QGMRES.

\begin{thm}
Let $\Aq\Pq^{-1}$ be a diagonalizable quaternion matrix, $\Aq\Pq^{-1}=\Xq\boldsymbol{\Lambda}\Xq^{-1}$ and $\Xq^{- 1}\rqq_0:=[\xq_1,\xq_2,\ldots,\xq_n]^T$,
where $\boldsymbol{\Lambda}=
\diag(\boldsymbol{\lambda}_1,\boldsymbol{\lambda}_2,\ldots,\boldsymbol{\lambda}_n)$
is the diagonal matrix of eigenvalues.
Define
\begin{equation*}
  \epsilon^{(m)}=
  \min_{p\in \mathbb{P}_m^q,~p(0)=1}
  \max_{i=1,\ldots,n}
    |p(\hat{\boldsymbol{\lambda}}_i)|
    =\min_{q\in \mathbb{P}_m^q,~q(0)=1}
  \max_{i=1,\ldots,n}
    |q( \boldsymbol{\lambda}_i)|.
\end{equation*}
where
$
p(\hat{\boldsymbol{\lambda}}_i)= \boldsymbol{\alpha}_0+ \hat{\boldsymbol{\lambda}}_i\boldsymbol{\alpha}_1+ \cdots+\hat{\boldsymbol{\lambda}}_i^{m-1}\boldsymbol{\alpha}_{m-1},~~
\hat{\boldsymbol{\lambda}}_i:=\xq_i^{-1}\boldsymbol{\lambda}_i \xq_i,
$
and
$
q( \boldsymbol{\lambda} _i)
= \xq_i(\boldsymbol{\alpha}_0 + \xq_i^{-1}\boldsymbol{\lambda}_i \xq_i\boldsymbol{\alpha}_1+\cdots+\xq_i^{-1}\boldsymbol{\lambda}_i^{m-1}\xq_i\boldsymbol{\alpha}_{m-1})\xq_i^{-1}.
$
Then, the residual norm achieved by the $m$th step of QGMRES  satisfies the inequality
\begin{equation*}
  \|\rqq_m\|_2\leq\kappa_2(\Xq)\epsilon^{(m)}\|\rqq_0\|_2,
\end{equation*}
where $\kappa_2(\Xq):=\|\Xq\|_2\|\Xq^{-1}\|_2$.

\end{thm}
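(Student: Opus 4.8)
The plan is to follow the template of the left-preconditioned convergence theorem, but now with the operator $\Aq\Pq^{-1}$, the true residual $\rqq_m=\bq-\Aq\xq_m$, and the affine search space $\xq_0+\Pq^{-1}\Ks_m^R(\Aq\Pq^{-1},\rqq_0)$ identified in Theorem \ref{right_xm}. First I would show that the residual of an arbitrary candidate in this space is a right-coefficient quaternion polynomial in $\Aq\Pq^{-1}$ acting on $\rqq_0$ with constant coefficient $1$. Writing $\xq=\xq_0+\Pq^{-1}\sum_{k=0}^{m-1}(\Aq\Pq^{-1})^k\rqq_0\boldsymbol{\beta}_k$ and using $\bq-\Aq\xq_0=\rqq_0$, the factor $\Aq\Pq^{-1}$ shifts every power up by one, so $\bq-\Aq\xq=\rqq_0-\sum_{k=1}^{m}(\Aq\Pq^{-1})^k\rqq_0\boldsymbol{\beta}_{k-1}=\sum_{k=0}^{m}(\Aq\Pq^{-1})^k\rqq_0\boldsymbol{\gamma}_k$ with $\boldsymbol{\gamma}_0=1$, i.e.\ a polynomial $p\in\mathbb{P}_m^q$ with $p(0)=1$. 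This is the right-preconditioned counterpart of the normalization $L_{j+1}(0,\cdot)=\rqq_0$ used earlier.

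Next I would diagonalize. From $\Aq\Pq^{-1}=\Xq\boldsymbol{\Lambda}\Xq^{-1}$ we get $(\Aq\Pq^{-1})^k=\Xq\boldsymbol{\Lambda}^k\Xq^{-1}$, and with $\Xq^{-1}\rqq_0=[\xq_1,\ldots,\xq_n]^T$ each term becomes $\Xq\boldsymbol{\Lambda}^k\Xq^{-1}\rqq_0\boldsymbol{\gamma}_k$. The one place where noncommutativity genuinely interferes is that the scalar $\boldsymbol{\gamma}_k$ multiplies from the right while $\boldsymbol{\Lambda}^k$ acts from the left, so the sum cannot be collapsed into $\Xq\,p(\boldsymbol{\Lambda})\Xq^{-1}\rqq_0$; this is the main obstacle. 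I would resolve it exactly as in the previous proofs, by introducing the diagonal matrices $\boldsymbol{D}_k=\diag(\dq_{k1},\ldots,\dq_{kn})$ with $\dq_{ki}=\xq_i\boldsymbol{\gamma}_k\xq_i^{-1}$ (and $\dq_{ki}=0$ when $\xq_i=0$). The identity $\dq_{ki}\xq_i=\xq_i\boldsymbol{\gamma}_k$ gives, componentwise, $\boldsymbol{\Lambda}^k\Xq^{-1}\rqq_0\boldsymbol{\gamma}_k=\boldsymbol{\Lambda}^k\boldsymbol{D}_k\Xq^{-1}\rqq_0$, so the residual factors as $\bq-\Aq\xq=\Xq(\sum_{k=0}^{m}\boldsymbol{\Lambda}^k\boldsymbol{D}_k)\Xq^{-1}\rqq_0$.

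Then I would pass to norms. Submultiplicativity yields $\|\bq-\Aq\xq\|_2\le\|\Xq\|_2\|\sum_k\boldsymbol{\Lambda}^k\boldsymbol{D}_k\|_2\|\Xq^{-1}\|_2\|\rqq_0\|_2$, and because $\sum_k\boldsymbol{\Lambda}^k\boldsymbol{D}_k$ is diagonal its $2$-norm equals $\max_i|\sum_k\boldsymbol{\lambda}_i^k\dq_{ki}|$. The decisive computation is $\sum_k\boldsymbol{\lambda}_i^k\dq_{ki}=\xq_i(\sum_k\hat{\boldsymbol{\lambda}}_i^k\boldsymbol{\gamma}_k)\xq_i^{-1}=\xq_i\,p(\hat{\boldsymbol{\lambda}}_i)\,\xq_i^{-1}=q(\boldsymbol{\lambda}_i)$, where $\hat{\boldsymbol{\lambda}}_i=\xq_i^{-1}\boldsymbol{\lambda}_i\xq_i$ so that $\hat{\boldsymbol{\lambda}}_i^k=\xq_i^{-1}\boldsymbol{\lambda}_i^k\xq_i$. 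Since conjugation by the nonzero quaternion $\xq_i$ preserves the modulus, $|q(\boldsymbol{\lambda}_i)|=|p(\hat{\boldsymbol{\lambda}}_i)|$; this simultaneously delivers the bound $\|\bq-\Aq\xq\|_2\le\kappa_2(\Xq)\max_i|p(\hat{\boldsymbol{\lambda}}_i)|\,\|\rqq_0\|_2$ and the claimed equality of the two expressions defining $\epsilon^{(m)}$.

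Finally, Theorem \ref{right_xm} guarantees that $\xq_m$ minimizes $\|\bq-\Aq\xq\|_2$ over the entire affine space $\xq_0+\Pq^{-1}\Ks_m^R$, and every candidate residual was just shown to correspond to some admissible $p$ with $p(0)=1$. Taking the minimum over all such $p$ therefore replaces $\max_i|p(\hat{\boldsymbol{\lambda}}_i)|$ by $\epsilon^{(m)}$, which gives $\|\rqq_m\|_2\le\kappa_2(\Xq)\epsilon^{(m)}\|\rqq_0\|_2$. The only genuinely new work relative to the left-preconditioned case is the telescoping identity for $\Aq\Pq^{-1}$ in the first step; the diagonalization and modulus-preservation arguments are identical, so I expect the $\boldsymbol{D}_k$ bookkeeping to be the sole technical hurdle.
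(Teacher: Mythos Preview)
Your proposal is correct and follows essentially the same route as the paper's own proof: represent the residual of an arbitrary candidate as a right-coefficient polynomial in $\Aq\Pq^{-1}$ applied to $\rqq_0$ with constant term $1$, diagonalize via $\Aq\Pq^{-1}=\Xq\boldsymbol{\Lambda}\Xq^{-1}$, absorb the right scalars into diagonal matrices $\boldsymbol{D}_k$, bound the norm of the diagonal sum by $\max_i|p(\hat{\boldsymbol{\lambda}}_i)|$, and finish with the optimality of $\xq_m$ from Theorem~\ref{right_xm}. Your write-up is in fact more explicit than the paper's---you spell out the identity $\boldsymbol{\lambda}_i^k\dq_{ki}=\xq_i\hat{\boldsymbol{\lambda}}_i^k\boldsymbol{\gamma}_k\xq_i^{-1}$ and the modulus-preservation step $|q(\boldsymbol{\lambda}_i)|=|p(\hat{\boldsymbol{\lambda}}_i)|$, both of which the paper leaves implicit.
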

\begin{proof}
Let $L_{j+1}(\Aq\Pq^{-1},\rqq_0)$ satisfy the constraint $L_{j+1}(0,\rqq_0)=\rqq_0~(\boldsymbol{\alpha}_0=1)$,
and $\xq$ the vector from $\Ks_m^R$
to which it is associated via
$(\bq-\Axq)=L_{j+1}(\Pq^{-1}\Aq,\rqq_0)$.
Then
\begin{align*}
\|\bq-\Axq\|_2
&=\Xq\boldsymbol{D}_0\Xq^{-1}\rqq_0+\Xq\boldsymbol{\Lambda} \boldsymbol{D}_1\Xq^{-1} \rqq_0+\cdots+\Xq\boldsymbol{\Lambda}^j \boldsymbol{D}_j\Xq^{-1} \rqq_0 \\
&\leq\|\Xq\|_2\|\boldsymbol{D}_0+\boldsymbol{\Lambda} \boldsymbol{D}_1+\cdots+\boldsymbol{\Lambda}^j \boldsymbol{D}_j\|_2\|\Xq^{-1}\|_2\|\rqq_0\|_2.
\end{align*}
Since $\boldsymbol{\Lambda}$ and $\boldsymbol{D}_0, \boldsymbol{D}_1,\ldots,\boldsymbol{D}_j$ are diagonal,
$
\|\boldsymbol{D}_0+\boldsymbol{\Lambda} \boldsymbol{D}_1+\cdots+\boldsymbol{\Lambda}^j \boldsymbol{D}_j\|_2
\leq\max_{i=1,\ldots,n}|p(\hat{\boldsymbol{\lambda}}_i)|.
$
Since $\xq_m$ minimizes the residual norm over $\xq_0+\Ks_m^R$, then for any combining form $L_{j+1}(\Aq\Pq^{-1},\rqq_0)$,
\begin{align*}
&\| \bq-\Axq_m\|_2 \leq\|\bq-\Axq\|_2\\
\leq&\|\Xq\|_2\|\Xq^{-1}\|_2\|\rqq_0\|_2\min_{p\in \mathbb{P}_m^q,~p(0)=1}\max_{i=1,\ldots,n}
|p(\hat{\boldsymbol{\lambda}}_i)|\\
=&\|\Xq\|_2\|\Xq^{-1}\|_2\|\rqq_0\|_2\min_{q\in \mathbb{P}_m^q,~q(0)=1}
\max_{i=1,\ldots,n}|q( \boldsymbol{\lambda}_i)|.
\end{align*}
This yields the desired result,
$
\|\rqq_m\|_2\leq\kappa_2(\Xq)\epsilon^{(m)}\|\rqq_0\|_2.
$
\end{proof}

Notice that the sharp upper bound of residual norm for QGMRES and preconditioned QGMRES converges to $0$ by using quaternion polynomial and similar quaternion polynomial.
 Table \ref{Ex_QGMRES} shows the results of applying the Algorithms \ref{code:leftQGMRES} and \ref{code:rightQGMRES}
with SGS (SSOR with $\omega=1$) preconditioning \cite{jin2010, jin1992, jin2007} to compute a quaternion linear system $\Axq=\bq$.
Let $\Aq \in\mathbb{Q}^{500\times 500}$ be a random diagonally dominant matrix.
Each component of the right-hand side $\bq$ is a random vector.
The tolerance is set as $\delta=1.0e$-$6$.
Setting initial solution is zero.
From Table \ref{Ex_QGMRES},  
we can observe that \texttt{QGMRES$_{lp}$} and \texttt{QGMRES$_{rp}$} converge in fewer iterations and cost less CPU time than \texttt{GMRES} and \texttt{QGMRES}, while their residual errors are comparable with each other.

\begin{table}[!h]
\tabcolsep 0pt \caption{}\label{Ex_QGMRES}  \vspace*{-25pt}
\begin{center}
\def\temptablewidth{1\textwidth}
{\rule{\temptablewidth}{1pt}}
\begin{tabular*}{\temptablewidth}{@{\extracolsep{\fill}}lcccc}
 \multirow{1}*{$Algorithm$} &  Iter &  CPU time & Residual  \\ \hline
  $\texttt{GMRES}$ &14& 0.2955 &7.2450e-07  \\
 $\texttt{QGMRES}$ &14& 0.1834 &3.2147e-07\\
 $\texttt{QGMRES$_{lp}$}$  &3& 0.0737 &1.4067e-07 \\
 $\texttt{QGMRES$_{rp}$}$  &3& 0.0813&  1.6287e-07  \\
\end{tabular*}
{\rule{\temptablewidth}{1.2pt}}
\end{center}
\end{table}

\subsection{Flexible QGMRES}\label{s:FQGMRESalgorithm}
We now examine the right preconditioned QGMRES algorithem of the flexible version.
According to the Algorithm \ref{code:rightQGMRES}, the approximate solution $\xq_m$ is expressed as
$
\xq_m=\xq_0+\Pq^{-1}\textbf{V}_m\yq_m=\xq_0+\sum^m_{j=1}\Pq^{-1}\vq_j\yq_j,
$
i.e., $\xq_m-\xq_0$ is a linear combination of the preconditioned vectors
$\Pq^ {-1}\vq_1,\Pq^{-1}\vq_2,\cdots,\Pq^{-1}\vq_m,$
which can be obtained from line 3 in Algorithm \ref{code:rightQGMRES}.
Since they are all obtained by applying the same preconditioning matrix $\Pq^{-1}$ to the $\vq_i'$s, it is not necessary to save them.
Thus, we only need to apply $\Pq^{-1}$ to the linear combination of the $\vq_i'$s, i.e., to $\textbf{V}_m\yq_m$ in line 13 of Algorithm \ref{code:rightQGMRES}.
Now we assume that the preconditioner can change at each step.
The flexible quaternion GMRES algorithm is described as follows (Algorithm \ref{code:FQGMRES}).

\begin{algorithm}
\setcounter{algorithm}{3}
\caption{Flexible QGMRES (FQGMRES)}
\label{code:FQGMRES}
\begin{algorithmic}[1]

\STATE Compute $\rqq_0=\bq-\Axq_0$, $\beta:=\|\rqq_0\|_2\ne 0$, $\vq_1:=\rqq_0/\beta$, and $\Pq_{j}:=I$
\FOR{ $j=1,2,\ldots,m$}\label{fgmresq2}
\STATE Compute  $\zq_{j}:=\Pq_{j}^{-1}\vq_{j}$
\STATE Compute $\boldsymbol{\omega}_j:=\Aq\zq_j$ \label{fgmresq4}
\FOR{ $i=1,2,\ldots,j$}
\STATE  $\hq_{ij}=\langle\boldsymbol{\omega}_j,\vq_i\rangle$;
\STATE  $\boldsymbol{\omega}_j:=\boldsymbol{\omega}_j-\hq_{ij}\vq_i$
\ENDFOR
\STATE $\hq_{j+1,j}=\|\boldsymbol{\omega}_j\|_2$
\STATE {If $\hq_{j+1,j}=0$ set $m:=j$ and go to line \ref{fgmresq12}}
\STATE $\vq_{j+1}=\boldsymbol{\omega}_j/\hq_{j+1,j}$
\ENDFOR\label{fgmresq10}
\STATE Update the flexible preconditioner $\Pq_{j}$
\STATE Define $(m+1)\times m$ quaternion Hessenberg matrix $\bar{\Hq}_m=[\hq_{ij}]_{1\leq i\leq m+1, 1\leq j\leq m}$.\\[-0.5pt]\label{fgmresq12}
\STATE Compute $\yq_m$ the minimizer of $\|\beta \textbf{e}_1-\bar{\Hq}_m\yq\|_2$
and $\xq_m=\xq_0+\Zq_m\yq_m$.\label{fgmresq13}
\end{algorithmic}
\end{algorithm}

The $m$th iteration of the FQGMRES algorithm updates a decomposition of the form
\begin{eqnarray}
\Aq\Zq_m &=& \Vq_m\Hq_m+\boldsymbol{\omega}_m\eq_m^*\label{e:AZ=VH_residual}\\
         &=& \Vq_{m+1}\bar{\Hq}_m,\label{e:AZ=VH}
\end{eqnarray}
where $\Zq_{m}\in\mathbb{Q}^{n\times m}$, $\Vq_{m+1}\in \mathbb{Q}^{n\times (m+1)}$, and $\bar{\Hq}_{m} \in\mathbb{Q}^{(m+1)\times m}$.
More specifically, in the above decomposition \eqref{e:AZ=VH},
$\bar{\Hq}_m$ denote the $(m+1)\times m$ quaternion Hessenberg matrix whose nonzero entries $\hq_{ij}$, and $\Hq_m$ be the quaternion matrix obtained from $\bar{\Hq}_m$ by deleting its last row;
$\Vq_{m+1}:=[\vq_1,\vq_2,\ldots,\vq_{m+1}]$ has orthonormal columns $\vq_{i}$, $i = 1,2,\ldots,m+1$;
$\eq_m$ is the $m$th column of the identity matrix;
$\Zq_{m}:=[\zq_1,\zq_2,\ldots,\zq_m]$ has columns $\zq_{i} =\Pq_{i}^{-1}\vq_{i}$, $i = 1,2,\ldots,m$.
Since the columns of $\Zq_{m}$ already include the contribution of the variable preconditioners $\Pq^{-1}_{i}\Aq$ for $i = 1,2,\ldots, m$,
they form a basis for the vector $ {\xq}_{m}$.
Therefore, at the $m$th step of FQGMRES, ${\xq}_{m}$ is approximated by the following vector
\begin{equation}\label{e:fgmresq_xm}
\xq_m=\xq_0+\sum^m_{i=1}(\Pq_{i}^{-1}\vq_{i})\yq_i=\xq_0+\Zq_{m}\yq_{m},
\end{equation}

An optimality property similar to the one which defines QGMRES can be proved.
Consider the residual vector for any quaternion vector $\zq = \xq_0 + \Zq_m \yq$ in the affine space $\xq_0+\texttt{span}\{\Zq_m\}$.
Due to decomposition \eqref{e:AZ=VH} and the properties of the matrices appearing therein, we have the relations
\begin{align}
\bq-\Aq\zq&=\bq-\Aq(\xq_0+\Zq_m\yq)\nonumber ~=~\rq_0-\Aq\zq_m\yq\nonumber \\
\label{e:fgmres_residual}& =\beta \vq_1-\textbf{V}_{m+1}\bar{\Hq}_m\yq  ~=~\textbf{V}_{m+1}(\beta\textbf{e}_1-\bar{\Hq}_m\yq),
\end{align}
where $\beta=\|\textbf{r}_0\|_2\ne 0,~\vq_1=\textbf{r}_0/\beta$.
Define
\begin{equation}\label{e:Jy}
J_m(\yq):=\|\bq-\textbf{Az}\|_2=\|\bq-\Aq(\xq_0+\textbf{Z}_m\yq)\|_2,
\end{equation}
one can observe from the orthogonality of the columns of $\textbf{V}_{m+1}$ that
\begin{equation}\label{e:Jy2}
  J_m(\yq)=\|\beta \textbf{e}_1-\bar{\Hq}_m\yq\|_2.
\end{equation}
Thus, the approximate solution ${\xq}_m=\xq_0+\Zq_{m}\yq_{m}$ obtained at the $m$th iteration of FQGMRES minimizes the residual norm over all the vectors in
\begin{equation}\label{RZm}
 \Ks_m(\Zq_{m})=\texttt{span}\{\Pq_{1}^{-1}\vq_{1},\ldots,\Pq_{m}^{-1}\vq_{m}\},
\end{equation}
where
$$
\yq_{m}=\arg\min_{\yq\in \mathbb{Q}^{m}}\|\beta\textbf{e}_1-\bar{\Hq}_m\yq\|_{2}.
$$
The approximation subspace $\Ks_m(\Zq_{m})$ can be regarded as a preconditioned quaternion Krylov subspace,
where the preconditioner is implicitly defined by successive applications of the matrices $\Pq_{i}^{-1}\Aq$ to the linearly independent vectors $\vq_{i}$.
The following result is proved.

\begin{thm}
The approximate solution $\xq_{m}$ obtained at step $m$ of FQGMRES minimizes the residual norm $\|\bq-\Aq\xq_m\|$ over $\xq_0+\texttt{span}\{\Zq_{m}\}$.
\end{thm}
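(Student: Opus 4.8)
The plan is to reduce the minimization of the residual over the affine space $\xq_0+\texttt{span}\{\Zq_m\}$ to the small least-squares problem solved in the last line of Algorithm \ref{code:FQGMRES}, following exactly the computation laid out in the paragraph preceding the theorem. First I would fix an arbitrary vector $\xq=\xq_0+\Zq_m\yq$ of the affine space, parametrized by $\yq\in\mathbb{Q}^m$, and rewrite its residual. Using the flexible Arnoldi relation \eqref{e:AZ=VH}, namely $\Aq\Zq_m=\Vq_{m+1}\bar{\Hq}_m$, together with $\rqq_0=\bq-\Aq\xq_0=\beta\vq_1=\Vq_{m+1}(\beta\eq_1)$ (here $\beta$ is real, so it commutes freely), I obtain the identity \eqref{e:fgmres_residual},
\[
\bq-\Aq\xq=\rqq_0-\Aq\Zq_m\yq=\Vq_{m+1}(\beta\eq_1-\bar{\Hq}_m\yq).
\]

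Second, I would invoke the orthonormality of the columns of $\Vq_{m+1}$ generated by the modified Gram--Schmidt steps of the quaternion Arnoldi process, i.e. $\Vq_{m+1}^*\Vq_{m+1}=I_{m+1}$. Because $\mathbb{Q}^n$ is a right quaternionic Hilbert space under the inner product of Definition \ref{d:innerproduct}, left multiplication by $\Vq_{m+1}$ is an isometry on its range: for every $\wq\in\mathbb{Q}^{m+1}$ one has $\|\Vq_{m+1}\wq\|_2^2=\wq^*\Vq_{m+1}^*\Vq_{m+1}\wq=\wq^*\wq=\|\wq\|_2^2$. Applying this with $\wq=\beta\eq_1-\bar{\Hq}_m\yq$ collapses the residual norm of \eqref{e:Jy} to the small quantity
\[
J_m(\yq)=\|\bq-\Aq\xq\|_2=\|\beta\eq_1-\bar{\Hq}_m\yq\|_2,
\]
which is precisely \eqref{e:Jy2}.

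Third, I would close the argument by recalling that the last line of Algorithm \ref{code:FQGMRES} sets $\yq_m=\arg\min_{\yq\in\mathbb{Q}^m}\|\beta\eq_1-\bar{\Hq}_m\yq\|_2$ and $\xq_m=\xq_0+\Zq_m\yq_m$. Since the displayed identity shows that $\yq\mapsto J_m(\yq)$ and $\yq\mapsto\|\beta\eq_1-\bar{\Hq}_m\yq\|_2$ are the same function of $\yq$, minimizing the latter over $\mathbb{Q}^m$ is equivalent to minimizing the residual norm over all $\xq\in\xq_0+\texttt{span}\{\Zq_m\}$; hence $\xq_m$ is the claimed minimizer.

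The step requiring the most care is the isometry in the second paragraph. Unlike standard (complex) GMRES, the columns of $\Zq_m$ here do \emph{not} form a Krylov basis, since the preconditioner $\Pq_j$ is allowed to change at every step; consequently one cannot appeal to any polynomial-in-$\Aq$ representation of the iterates, and the entire optimality must rest solely on the flexible decomposition \eqref{e:AZ=VH} and the orthonormality of $\Vq_{m+1}$. I would therefore verify explicitly that $\|\Vq_{m+1}\wq\|_2=\|\wq\|_2$ survives the noncommutative setting where the coefficients $\wq$ act from the right, which follows directly from $\Vq_{m+1}^*\Vq_{m+1}=I_{m+1}$ and the definition of the quaternion $2$-norm, and I would note that the reduced problem for $\bar{\Hq}_m$ is the upper-Hessenberg quaternion least-squares problem already treated earlier in the paper. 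Everything else is routine substitution.
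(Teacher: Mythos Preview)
Your proposal is correct and follows exactly the paper's approach: the paper establishes the result in the discussion immediately preceding the theorem by deriving \eqref{e:fgmres_residual}, invoking the orthonormality of the columns of $\Vq_{m+1}$ to obtain \eqref{e:Jy2}, and noting that line~\ref{fgmresq13} of Algorithm~\ref{code:FQGMRES} defines $\yq_m$ as the minimizer of this reduced problem. Your added care about the quaternion isometry $\|\Vq_{m+1}\wq\|_2=\|\wq\|_2$ in the noncommutative setting is a welcome clarification but does not depart from the paper's argument.
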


For the case of a breakdown in FQGMRES, the only possibility is when $\hq_{j+1,j}=0$ at a given step $j$ in the quaternion Arnoldi loop.
Then the algorithm stops because the next quaternion Arnoldi vector cannot be generated.

\begin{thm}
Assume that $\beta=\|\rqq_0\|\neq 0$ and that $j-1$ steps of FQGMRES have been successfully performed,
i.e., that $\hq_{i+1,i}\neq0$ for $i<j$.
In addition, assume that the matrix $\Hq_j$ is nonsingular.
Then the approximate solution $\xq_j$ is exact, if and only if $\hq_{j+1,j}=0$.
\end{thm}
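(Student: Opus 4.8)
The plan is to reduce the claim to the least-squares characterization of the FQGMRES residual and then to replace the classical determinant argument by a direct backward-substitution argument valid over the division ring $\mathbb{Q}$. This is the quaternion analogue of the standard unreduced-Hessenberg breakdown criterion for GMRES.

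First, I recall from \eqref{e:Jy}--\eqref{e:Jy2} that the step-$j$ residual satisfies $\|\bq-\Aq\xq_j\|_2=J_j(\yq_j)=\min_{\yq}\|\beta\eq_1-\bar{\Hq}_j\yq\|_2$, where the second equality uses the orthonormality of the columns of $\Vq_{j+1}$ in \eqref{e:AZ=VH}. Hence $\xq_j$ is exact if and only if this minimum equals $0$. I then split the $(j+1)$-vector $\beta\eq_1-\bar{\Hq}_j\yq$ according to the block form of $\bar{\Hq}_j$, whose top $j\times j$ block is $\Hq_j$ and whose last row is $\hq_{j+1,j}\eq_j^*$: the top $j$ entries are $\beta\eq_1-\Hq_j\yq$ and the last entry is $-\hq_{j+1,j}(\yq)_j$, with $(\yq)_j$ the $j$th component.

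For the implication $\hq_{j+1,j}=0\Rightarrow\xq_j$ exact, the last entry vanishes identically, so $J_j(\yq)=\|\beta\eq_1-\Hq_j\yq\|_2$; since $\Hq_j$ is nonsingular, the choice $\yq_j:=\beta\Hq_j^{-1}\eq_1$ annihilates this, giving zero residual. Conversely, if $\xq_j$ is exact then $\beta\eq_1=\bar{\Hq}_j\yq_j$; the top block forces $\Hq_j\yq_j=\beta\eq_1$, so $\yq_j=\beta\Hq_j^{-1}\eq_1$ is uniquely determined, while the last row forces $\hq_{j+1,j}(\yq_j)_j=0$. It therefore remains only to show $(\yq_j)_j\neq0$, for then $\hq_{j+1,j}=0$ follows.

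The main obstacle is precisely this nonvanishing of the last component. Over a commutative field one would write $(\yq_j)_j=\beta[\Hq_j^{-1}]_{j,1}$ and identify the relevant cofactor with a product of subdiagonal entries, but the Study/Dieudonn\'e determinant over $\mathbb{Q}$ admits no such cofactor expansion, so I avoid determinants entirely. Instead, writing $\yq_j=[\eta_1,\ldots,\eta_j]^T$ and assuming for contradiction that $\eta_j=0$, I run backward substitution in $\Hq_j\yq_j=\beta\eq_1$. Because $\Hq_j$ is upper Hessenberg, row $i$ has its leftmost nonzero in column $i-1$, so once $\eta_i,\ldots,\eta_j$ are known to vanish the $i$th equation reduces (for $i\ge 2$, where the right-hand side is $0$) to $\hq_{i,i-1}\eta_{i-1}=0$; since $\hq_{i,i-1}\neq0$ by the hypothesis $\hq_{k+1,k}\neq0$ for $k<j$ and $\mathbb{Q}$ has no zero divisors, $\eta_{i-1}=0$. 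Starting from $\eta_j=0$ and descending to $i=2$ yields $\eta_{j-1}=\cdots=\eta_1=0$, hence $\yq_j=0$, contradicting $\Hq_j\yq_j=\beta\eq_1$ with $\beta\neq0$. Thus $\eta_j\neq0$ and $\hq_{j+1,j}=0$, completing the equivalence; the case $j=1$ is immediate since then $\eta_1=\beta\hq_{11}^{-1}\neq0$.
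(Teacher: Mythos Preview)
Your proof is correct and follows essentially the same route as the paper's: both directions hinge on the block splitting of $\bar{\Hq}_j$ into $\Hq_j$ and the last row $\hq_{j+1,j}\eq_j^*$, with the converse reducing to showing that the last component of $\yq_j$ cannot vanish. The paper reaches the two conditions $\Hq_j\yq_j=\beta\eq_1$ and $\eq_j^*\yq_j=0$ via the decomposition \eqref{e:AZ=VH_residual} and orthogonality of the $\vq_i$'s, whereas you obtain them directly from the least-squares residual formula \eqref{e:Jy2}; these are equivalent starting points. The only substantive difference is that you spell out the backward-substitution step over the division ring $\mathbb{Q}$ explicitly, while the paper simply asserts ``all components of $\yq_j=0$'' from the nonsingular Hessenberg structure---your explicit argument using the hypothesis $\hq_{i+1,i}\neq0$ for $i<j$ and the absence of zero divisors in $\mathbb{Q}$ is in fact the justification the paper omits, so your write-up is the more complete of the two.
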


\begin{proof}
If $\hq_{j+1,j}=0$, then $\Aq\Zq_j=\Vq_j\Hq_j$ and
\begin{equation}\label{e:breakdown_1}
J_j(\yq)=\|\beta \vq_1-\Aq\Zq_j\yq_j\|_2=\|\beta \vq_1-\Vq_j\Hq_j\yq_j\|_2=\|\beta \eq_1-\Hq_j\yq_j\|_2.
\end{equation}
Since $\Hq_j$ is a nonsingular quaternion matrix, \eqref{e:breakdown_1} implies
$
\yq_j=\Hq_j^{-1}(\beta \eq_1)
$
such that the corresponding minimum norm reached $0$.
Thus, $\xq_j$ is exact.

Conversely, if $\xq_j$ is exact, it follows from \eqref{e:AZ=VH_residual} and \eqref{e:fgmres_residual} that
\begin{equation}\label{e:breakdown_2}
 0=\bq-\Aq\xq_j=\Vq_j(\beta \eq_1-\Hq_j\yq_j)+\vq_{j+1}\eq_j^*\yq_j.
\end{equation}
By contradiction, we assume that $\vq_{j+1}\neq 0$.
Since $\vq_1,\vq_2,\ldots,\vq_{m+1}$ form an orthogonal system and $\Hq_j$ is a nonsingular quaternion Hessenberg matrix,
then by \eqref{e:breakdown_2} we have all components of $\yq_j=0$, i.e., $\beta=0$,
which contradicts $\beta\neq0$.
Thus, $\hq_{j+1,j}=0$.
\end{proof}

\section{Quaternion Total Variation Regularization Model}\label{s:qctv}

In this section, we introduce a novel quaternion regularization model to solve ill-posed inverse problems \eqref{ill} from color image processing within a variational framework.
Firstly, we give the following definition.
Let $\Xq$ be an $n \times n$ quaternion matrix which produced by a color image \cite{jnw19} and the vector $\xq\in\mathbb{Q}^{n^2}$ obtained by stacking the columns of $\Xq$.
For convenience, we collectively denote $n^2$ as $N$.
Suppose $\Aq$ is a blur operator and $\eq$ denotes unknown noise.
Then $\bq$ is an observed color image.
Let $D_h$ and $D_v$ denote the forward finite
difference approximations of the horizontal and vertical first derivative operators, respectively.
The subscript $[\cdot]_{i}$ denotes the $i$th element of a vector.
The total variation of color image $\xq$ is defined by
\begin{equation*}
{\rm QTV}(\xq)=\sum_{i=1}^{N} \big( |D_{h}\xq_{i}|^{2}+|D_{v}\xq_{i}|^{2} \big)^{\frac{1}{2}}.
\end{equation*}
In fact, TV$(\xq)$ is the isotropic total variation of  $\xq$, which measures the magnitude of the discrete gradient of $\xq$ in the $\ell^{1}$ norm.
Then the solution of \eqref{ill} can be computed by solving the following quaternion total variation regularization model
\begin{equation}\label{e:TVQ}
  \xq_{\text{QTV},\lambda} = \arg \min _{\xq\in \mathbb{Q}^{N}}
\|\Aq\xq-\bq\|_{2}^{2} + \lambda\text{QTV}(\xq),
\end{equation}
which is a color image restoration model.

Since the convex optimization problem \eqref{e:TVQ} is very challenging to solve,
the QIRN strategy was proposed to solving this problem.
This idea was first proposed for total variation regularization in \cite{Wohlberg2007},
which consist of the solution of a sequence of penalized weighted least-squares problems with diagonal weighting matrices incorporated into the regularization term and dependent on the previous approximate solution.
In addition, we proposed the quaternion collaborative total variation regularization relative to quaternion vectors in the process of demonstration to represent the regularization term in model \eqref{e:TVQ}.

\subsection{Quaternion collaborative total variation regularization}\label{s:qctvregularization}

Define a mapping
\begin{equation}\label{e:mapping}
\begin{split}
 \Psi: \mathbb{Q}^{N}&\longrightarrow  \mathbb{R}^{N\times 4} \\
 \xq &\longmapsto \Psi(\xq),\\
\end{split}
\end{equation}
where the column vectors of $\Psi(\xq)$ are the real part and three imaginary parts of $\xq$, denoted as $x^{0},x^{r},x^{g},x^{b}$.
For the represented color image, $x^{r},x^{g},x^{b}$ are the three color channels of red, green, and blue.
Consider the gradient of this color image as a three-dimensional matrix or tensor with dimensions corresponding to the spatial extent, the intensity differences between neighboring pixels, and the spectral channels.
For discrete isotropic total variation regularization term $\text{TV}(\xq)$,
we can define a collaborative total variation of $\Psi(\xq)$ by \eqref{l_pqr} equivalent to $\text{TV}(\xq)$.

\begin{thm}\label{thm:ctv}
The total variation regularization for a quaternion vector $\xq\in\mathbb{Q}^{N}$ is equivalent to the collaborative total variation regularization for its map $\Psi(\xq)\in \mathbb{R}^{N\times 4}$ defined as a $\ell^{2,2,1}(col,der,pix)$ norm
\begin{equation}\label{e:CTV_Q}
\|D\Psi(\xq)\|_{2,2,1}
=\sum_{i=1,\ldots,N}\Big(\sum_{j=h,v}\big(\sum_{k=0,r,g,b}|D_{j}x_{i}^{k}|^{2}\big)\Big)^{\frac{1}{2}},
\end{equation}
where the $D_h$ and $D_v$ denote the forward finite
difference approximations of the horizontal and vertical first derivative operators, respectively; $x^{k}$ are the color channels of $\xq$;
the subscript $[\cdot]_{i}$ denotes the $i$th element of this vector.
\end{thm}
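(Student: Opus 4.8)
The plan is to unfold the quaternion modulus appearing in $\text{QTV}(\xq)$ into its four real components and to match the result, term by term, against the innermost summation of the $\ell^{2,2,1}$ norm in \eqref{e:CTV_Q}.

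First I would exploit that the finite-difference operators $D_h$ and $D_v$ are \emph{real} linear maps, so that they commute with the splitting of a quaternion into its real and imaginary parts. Writing $\xq_i = x_i^0 + x_i^r\iq + x_i^g\jq + x_i^b\kq$ in accordance with the columns of $\Psi(\xq)$, this gives
\begin{equation*}
D_j\xq_i = (D_j x_i^0) + (D_j x_i^r)\iq + (D_j x_i^g)\jq + (D_j x_i^b)\kq, \qquad j\in\{h,v\}.
\end{equation*}

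Second, I would apply the definition of the quaternion modulus $|\qq|^2 = q_0^2+q_1^2+q_2^2+q_3^2$ to each directional derivative above. Since the four coefficients are exactly the real scalars $D_j x_i^k$ with $k\in\{0,r,g,b\}$, this yields $|D_j\xq_i|^2 = \sum_{k=0,r,g,b}|D_j x_i^k|^2$. In other words, the squared quaternion modulus of a directional derivative coincides precisely with the inner $\ell^2(col)$ aggregation over the four channels.

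Finally, I would substitute these identities into the definition of $\text{QTV}(\xq)$. Summing the horizontal and vertical contributions under the square root reproduces the $\ell^2(der)$ norm over $j\in\{h,v\}$, and the outer summation over $i$ reproduces the $\ell^1(pix)$ norm, so that $\text{QTV}(\xq)=\|D\Psi(\xq)\|_{2,2,1}$ exactly. I do not anticipate a genuine obstacle here: the equivalence is essentially definitional once the componentwise action of the real difference operators is isolated, the one structural feature that makes the quaternion and real-counterpart formulations agree. The only point demanding care is to verify that $\Psi$ arranges the real part and the three imaginary parts in the same order as the color index $k\in\{0,r,g,b\}$, so that the quaternion modulus and the $\ell^2(col)$ norm genuinely agree rather than merely being equal up to a permutation of channels.
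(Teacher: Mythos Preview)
Your proposal is correct and matches the paper's own proof, which simply expands the $\ell^{2,2,1}(col,der,pix)$ norm and regroups the inner sums to recognize $\sum_{k=0,r,g,b}|D_j x_i^k|^{2}=|D_j\xq_i|^{2}$, arriving at $\text{TV}(\xq)$. The only cosmetic difference is direction: the paper starts from $\|D\Psi(\xq)\|_{2,2,1}$ and reduces to $\text{TV}(\xq)$, whereas you start from $\text{QTV}(\xq)$ and build up to the collaborative norm, but the chain of identities is identical.
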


\begin{proof}
By the definition of collaborative norm, we have
\begin{align}
 \nonumber &\|D\Psi(\xq)\|_{2,2,1} (col,der,pix)\\\nonumber
=&\sum_{i=1,\ldots,N} \Big(\sum_{j=h,v}  \big(|D_{j}x_{i}^{0}|^{2}+|D_{j}x_{i}^{r}|^{2}+|D_{j}x_{i}^{g}|^{2}+|D_{j}x_{i}^{b}|^{2}\big)^\frac{2}{2}  \Big)^{\frac{1}{2}}\\\nonumber
=&\sum_{i=1,\ldots,N}   \Big(  \big(|D_{h}x^{0}_{i}| ^{2}+ |D_{h}x^{r}_{i}|^{2}+ |D_{h}x^{g}_{i}|^{2}+ |D_{h}x^{b}_{i}|^{2} \big) \\\nonumber
 &~~~~+                   \big(|D_{v}x^{0}_{i}|^{2}+ |D_{v}x^{r}_{i}|^{2}+ |D_{v} x^{g}_{i}|^{2}+ |D_{v} x^{b}_{i}|^{2}\big) \Big)^{\frac{1}{2}}   \\\nonumber
=&\sum_{i=1,\ldots,N} \Big( \sum_{k=0,r,g,b}{|D_{h}x^{k}_{i}| ^{2}}+ \sum_{k=0,r,g,b}{|D_{v}x^{k}_{i}| ^{2}} \Big)^{\frac{1}{2}}   \\\label{e:ctvterm1}
=&\sum_{i=1,\ldots,N} \big( |D_{h}\xq_{i}|^{2}+|D_{v}\xq_{i}|^{2} \big)^{\frac{1}{2}}
= \text{TV}(\xq).
\end{align}
Thus, the proof is completed.
\end{proof}

In fact, if we define the collaborative total variation regularization for $\Psi(\xq)$ as $\ell^{2,2,1}(der,col,pix)$ norm,
i.e., exchange the order of two dimensions of color channels and derivatives,
the theorem still holds.
In this paper, we only choose the first case, i.e., the order is $col, der, pix$.
For convenience, we rewrite $\|D\Psi(\xq)\|_{2,2,1} (col,der,pix)$ in the short form $\|D\Psi(\xq)\|_{2,2,1}$ in the following discussion.
In addition, the real part is a zero matrix if the practice color image is processed, i.e., the color channel here has only three channels $x^{r}, x^{g}, x^{b}$.

\subsection{Quaternion iteratively reweighted norm method}\label{s:qarnoldi}
We now focus on approximating the quaternion TV regularization operator in \eqref{e:TVQ}.
Consider the matrix
\begin{equation}\label{e:d1d}
  D_{1d}=\left[
      \begin{array}{ccccc}
         1 & -1 &   &  \\
          & \ddots & \ddots &  \\
          &   & 1 & -1 \\
      \end{array}
    \right]\in \mathbb{R}^{(n-1)\times n},
\end{equation}
which is a scaled finite difference approximation for the one-dimensional derivative \cite{Hansen2010, Hansen2007}.
Then the discrete first derivatives in the horizontal and vertical directions
are given by 
$$D_{h}\xq=(D_{1d}\otimes I)\xq\in \mathbb{Q}^{\tilde{N}},\quad D_{v}\xq=(I\otimes D_{1d})\xq\in \mathbb{Q}^{\tilde{N}},$$
for $\tilde{N}=(n-1)\times n$, where $I$ is the identity matrix of size $n$.
For the given $\xq$, one takes
\begin{equation}\label{e:Dhv}
  D_{hv}=\left[
           \begin{array}{c}
             D_{h} \\
             D_{v} \\
           \end{array}
         \right]\in \mathbb{R}^{2\tilde{N}\times N},
\end{equation}
then we have
\begin{equation*}
  \|D_{hv}\xq\|_{2}^{2}= \sum_{i=1,\ldots,\tilde{N}}|[D_{h}\xq]_{i}|^{2}+|[D_{v}\xq]_{i}|^{2}.
\end{equation*}

Now we consider the diagonal weighting matrix
\begin{equation}\label{w2d}
W_{2d}=\texttt{diag} \big( \tilde{W}_{2d},\tilde{W}_{2d} \big)\in \mathbb{R}^{2\tilde{N} \times 2\tilde{N}},
\end{equation}
where
\begin{equation}\label{tildew2d}
\large \tilde{W}_{2d}
:=\tilde{W}_{2d}(D_{hv}\xq)
=\texttt{diag}\bigg( \Big(\big(~|[D_{h}\xq]_{i}|^{2}+|[D_{v}\xq]_{i}|^{2}\big)^{-\frac{1}{4}} \Big)_{i=1,2,\ldots,\tilde{N}} \bigg)
\in \mathbb{R}^{\tilde{N} \times \tilde{N}}.
\end{equation}
By the Theorem \ref{thm:ctv}, we can verify that the optimal regularization matrix to choose in order to recover the TV regularization operator \cite{Calvetti2002} is
$P=W_{2d}D_{hv}\in \mathbb{R}^{2\tilde{N} \times \tilde{N}}$ since
\begin{align*}
& \large \|P\xq\|_{2}^{2}
  =\left\| W_{2d}D_{hv}\xq \right\|_{2}^{2}\\[0.15in]
=&\left\|\left[
             \begin{array}{cc}
               \tilde{W}_{2d} & 0 \\
                0 & \tilde{W}_{2d} \\
             \end{array}
           \right]
           \left[
         \begin{array}{c}
           D_{h} \\
           D_{v}\\
         \end{array}
       \right]
       \xq\right\|_{2}^{2}
  =\left\|\left[
        \begin{array}{c}
          \tilde{W}_{2d}D_{h}\xq\\
         \tilde{W}_{2d}D_{v}\xq\\
        \end{array}
      \right]
   \right\|_{2}^{2}\\[0.15in]
= &\sum_{i=1,\ldots,\tilde{N}}\big( {[ \tilde{W}_{2d}]}_{ii} ^{2} (|[D_{h}\xq]_{i}|^{2}+|[D_{v}\xq]_{i}|^{2}) \big)\\
=&\sum_{i=1,\ldots,\tilde{N}} \big(~|[D_{h}\xq]_{i}|^{2}+|[D_{v}\xq]_{i}|^{2}\big)^{-\frac{1}{2}}
   \big(~|[D_{h}\xq]_{i}|^{2}+|[D_{v}\xq]_{i}|^{2}\big)  \\
=&\sum_{i=1,\ldots,\tilde{N}}\big(~|[D_{h}\xq]_{i}|^{2}+|[D_{v}\xq]_{i}|^{2}\big)^{\frac{1}{2}}
   = \|D\Psi(\xq)\|_{2,2,1}  =\text{TV}(\xq).
\end{align*}
Thus, for the solution of quaternion TV problem, we can solve a sequence of regularized problems with rescaled penalization terms expressed as reweighted $\ell_2$ norms,
whose weights are iteratively updated using a previous approximation of the quaternion solution, i.e.,
\begin{equation}\label{e:appro_TV}
  \xq_{P,\lambda}=\arg\min_{\xq\in\mathbb{Q}^{N}}\|\Aq\xq-\bq\|_{2}^{2}+\lambda \|P \xq\|_{2}^{2},
\end{equation}
where $P=W_{2d}D_{hv} $.
Clearly, $\xq_{\text{TV},\lambda}= \xq_{P,\lambda}$.

Define $P\xq_{\text{QTV},\lambda}:=\hat{\yq}$.
The solution of \eqref{e:appro_TV} can be equivalently expressed as a standard form:
\begin{equation}\label{bar_y1}
\hat{\yq}=\arg\min_{\hat{\yq}\in\mathbb{Q}^{N}}\|\hat{\Aq}\hat{\yq}
-\hat{\bq}\|_{2}^{2}+\lambda \|\hat{\yq}\|_{2}^{2},
\end{equation}
where
\begin{equation}\label{bar_y}
\hat{\Aq}=\Aq P^{\dagger}, \quad  \hat{\bq}=\bq-\Aq\xq_0.
\end{equation}
Here $P^{\dagger}$ is the Moore–Penrose pseudoinverse of $P$.
Notice this regularization matrix $P$ is a real matrix.
By the definition of the TV regularization operator $P=W_{2d}D_{hv}$ and \eqref{tildew2d}, it is relative to the updated solution.
Thus, the process of solving $\hat{\yq}$ is actually flexible QGMRES.
We have the following algorithm, which leads to the decomposition
\begin{equation*}
\hat{\Aq}\Zq_m = \Zq_{m+1}\bar{\Hq}_m,
\end{equation*}
where $\Zq_{m+1}:=[\zq_1,\zq_2,\ldots,\zq_{m+1}]\in \mathbb{Q}^{n\times (m+1)}$ has orthonormal columns that span the quaternion Krylov subspace $\Ks_m(\hat{\Aq},\zq_0)$.

\begin{algorithm}[H]
\setcounter{algorithm}{2}
\caption{QTV-FQGMRES}
\label{code:ctv_FQGMRESinverse}
\begin{algorithmic}[1]
\STATE Input $\hat{\Aq}$, $\hat{\bq}$ as in \eqref{bar_y} and take $W_{(1)}=I$, $\vq_1=\bq/\|\bq\|_2$.
\FOR{ $j=1,2,\ldots,m$}\label{ctvfgmresinv3}
\STATE Compute  $\zq_{j}:=P_{j}^{\dagger}\vq_{j}=(W_{(j)}D)^{\dagger}\vq_{j}$
\STATE Compute $\boldsymbol{\omega}_j:= \hat{\Aq} \zq_j$\label{ctvfgmresinv5}
\FOR{ $i=1,2,\ldots,j$}
\STATE  $\hq_{ij}=\langle\boldsymbol{\omega}_j,\vq_i\rangle$;
\STATE  $\boldsymbol{\omega}_j:=\boldsymbol{\omega}_j-\hq_{ij}\vq_i$
\ENDFOR
\STATE $\hq_{j+1,j}=\|\boldsymbol{\omega}_j\|_2$
\STATE {If $\hq_{j+1,j}=0$ set $m:=j$ and go to line \ref{ctvfgmresinv15}}
\STATE $\vq_{j+1}=\boldsymbol{\omega}_j/\hq_{j+1,j}$
\STATE Update the weights $W_{(j+1)}$
\ENDFOR\label{ctvfgmresinv14}
\STATE Define $(m+1)\times m$ quaternion Hessenberg matrix $\bar{\Hq}_m=[\hq_{ij}]_{1\leq i\leq m+1, 1\leq j\leq m}$.\\[-0.5pt]\label{ctvfgmresinv15}
\STATE Compute $\yq_m$ the minimizer of $\|\beta \textbf{e}_1-\bar{\Hq}_m\yq\|_2$
and $ \hat{\yq}_m=\yq_0+\textbf{Z}_m \yq_m$.\label{ctvfgmresinv16}
\end{algorithmic}
\end{algorithm}

The main cost of Algorithm \ref{code:ctv_FQGMRES} is computing the pseudoinverse of preconditioner $P_j$.
If $\tilde{N}$ is large, such cost is unacceptable.
Now, we consider search a approximation of problem \eqref{e:appro_TV} belonging to $\xq_0 + \Ks_m(\Aq,\rqq_0)$ by generalized Arnoldi Tikhonov method \cite{Berisha2014, Calvetti2000}.

For the quaternion Arnoldi process in $\xq_0 + \Ks_m(\Aq,\rqq_0)$, we obtain the decomposition $\AVq_m = \Vq_{m+1}\bar{\Hq}_m$.
Default $\xq_0={\bf 0}$ for simplicity.
Then replacing $\xq=\Vq_m\yq$ into \eqref{e:appro_TV} yields the reduced minimization problem
\begin{equation}\label{e:reducedproblem}
  \yq_m=\arg\min_{\yq\in\mathbb{Q}^{N}}\| \bar{\Hq}_m\yq-\Vq_{m+1}^*\rqq_0\|_{2}^{2}+\lambda \|P\Vq_m \yq\|_{2}^{2}.
\end{equation}
At each step of the Arnoldi algorithm, instead of solving \eqref{e:reducedproblem} directly, we consider
the following equivalent reduced-dimension least squares formulation
\begin{equation}\label{e:reduced2}
  \yq_m=\arg\min_{\yq\in\mathbb{Q}^{N}}\left\|\left[
         \begin{array}{c}
           \bar{\Hq}_m \\
          \sqrt{\lambda}P\Vq_m\\
         \end{array}
       \right]\yq-
       \left[
         \begin{array}{c}
           c \\
           0\\
         \end{array}
       \right]
       \right\|_{2}^{2},
\end{equation}
where $c=\|\rqq_0\|_2$ $e_1$.
For the typically tall coefficient matrix in \eqref{e:reduced2}, one can compute a QR factorization $P\Vq_m=\Qq_m\Rq_m$, where $\Qq_m$ has orthonormal columns and $\Rq$ is an upper triangular matrix.
Then replacing the matrix $P\Vq_m$ in \eqref{e:reduced2} by $\Rq_m$.
Thus, we have the following algorithm.

\begin{algorithm}[H]
\setcounter{algorithm}{3}
\caption{Improved QTV-FQGMRES}
\label{code:ctv_FQGMRES}
\begin{algorithmic}[1]
\STATE Input $\Aq$, $\bq$ and take $W_{(1)}=I$, $\vq_1=\bq/\|\bq\|_2$.
\FOR{ $m=1,2,\ldots,N$  until  $\|\bq-\Aq\xq_m\|\le tol$}\label{ctvfgmres3}
\STATE Compute the quaternion Hessenberg matrix $\bar{\Hq}_m$ and $\Vq_m$ in \eqref{e:AV=VH} by quaternion Arnoldi method.
\FOR{ $j=1,2,\ldots,m$}
\STATE Compute the least squares solution $\yq_m$ in \eqref{e:reduced2} by Algorithm \ref{code:ctv_FQGMRESinverse}, $P_{j}=W_{(j)}D$.
\ENDFOR
\ENDFOR\label{ctvfgmres14}
\end{algorithmic}
\end{algorithm}

\section{Numerical Experiments}\label{s:Experiments}

This section compares the proposed Algorithms with
the classical Algorithms for quaternion linear systems.
All experiments were implemented by MATLAB (R2016a) on a personal computer with 2.30 GHz Intel Core i7-10510U processor and 12 GB memory.
All algorithms to be compared with each other are listed as follows.\\

\begin{itemize}
\item \texttt{GMRES}-- The classical GMRES for solving linear systems.
\item \texttt{QGMRES}--Structure-preserving quaternion GMRES method in Algorithm \ref{code:QGMRES}.
\item \texttt{IRhybrid-FGMRES(IRFGMRES)}--Hybrid version of FGMRES algorithm for square systems in \cite{Gazzola2015, Gazzola2019}.
\item \texttt{FQGMRES}-- Flexible quaternion GMRES method in Algorithm \ref{code:FQGMRES}.
\item \texttt{QTV-FQGMRES}--Flexible quaternion GMRES method for quaternion total variation regularization model in Algorithm \ref{code:ctv_FQGMRES}.
\item \texttt{SV-TV}--Color image restoration model in \cite{jnw19}.\\
\end{itemize}

\begin{example}
In the first example, we compare the performance of the proposed algorithm with the traditional methods for sparse quaternion matrices.
Let $\Aq \in\mathbb{Q}^{N\times N}$ be of the form with its four parts  $A_0$, $A_1$, $A_2$ and $A_3$  being  the order-$3000$ principle submatrices of  $\texttt{bcspwr10}$, $\texttt{af23560}$, $\texttt{rw5151}$ and $\texttt{rdb5000}$ {\rm(}from Matrix Market\footnote{https://math.nist.gov/MatrixMarket}{\rm)}.
Each component of the right-hand side $\bq$ is a vector of all ones.
The tolerance is set as $\delta=1.0e$-$6$.
\end{example}

The above algorithms are applied to solve the large quaternion system  $\Aq\xq=\bq$.
The numerical results are listed in Table \ref{table1}.
``Iter'' denotes either the times of restarting for restarted algorithms or the times of iteration for algorithms without restarting.
n.c. denote the algorithms not converge within the maximum number of iterations. In the case of similar residuals, {\nobreak IRFGMRES} and  {\nobreak FQGMRES} work better. Algorithms {\nobreak GMRES} and {\nobreak QGMRES } cost more cpu time.
For the algorithm FQGMRES, define preconditioner $\Pq_{j}=\diag(\sqrt{|\bq-\Aq \xq_j|})$
in each iteration $j$.
We observed that the Algorithm \texttt{IRFGMRES} and \texttt{FQGMRES} converge in fewer iterations and less CPU time.

\begin{table}[!h]
\tabcolsep 0pt
\caption{ Numerical results of GMRES, QGMRES, IRFGMRES and FQGMRES.}\label{table1} \vspace*{-10pt}
\begin{center}
\def\temptablewidth{1\textwidth}
{\rule{\temptablewidth}{1pt}}
\begin{tabular*}{\temptablewidth}{@{\extracolsep{\fill}}lcccc}
  {$Algorithm$}&
Iter &  CPU time& \ \ Residual \\ 
  \hline
                    $\texttt{ GMRES }$ & 254 & 1.7085e+02& 9.5702e-07 \\
                    $\texttt{ QGMRES }$ &  233 & 1.8332e+02& 9.6491e-07 \\
                     $\texttt{ IRFGMRES }$ & 249 &\textbf{5.5092}& 1.0052e-06 \\
                     $\texttt{ FQGMRES}$ &  \textbf{226} & 8.9000& \textbf{9.4253e-07} \\
\end{tabular*}
{\rule{\temptablewidth}{0.5pt}}
\end{center}
\end{table}

\begin{example}
In this example, we solve the quaternion system for the signal matrix. A three-dimensional signal can be denoted by a quaternion function of time,
$\xq(t)=x_r(t)\iq+x_g(t)\jq+x_b(t)\kq$, where  $x_r(t), x_g(t)$, and $x_b(t)$ are real functions
(for example, they refer to
 the  red, green, and blue  channels, respectively).
We are interested in determining  quaternion filters
 $\{\wq{(s)}\}_{s=0}^n$,  where  $\wq{(s)}=w{(s)}_0+w{(s)}_r\iq+w{(s)}_g\jq+w{(s)}_b\kq$
on the input signal $\xq(t)=x_r(t)\iq+x_g(t)\jq+x_b(t)\kq$
such that the filtered output can match with the target signal
$\yq(t)=y_r(t)\iq+y_g(t)\jq+y_b(t)\kq$. More precisely, we have
\begin{equation}\label{e:ywx}
\yq(t)=\sum\limits_{s=0}^{n} \xq(t-s)*\wq{(s)} .
\end{equation}

The numerical results of applying {\nobreak GMRES}, {\nobreak QGMRES}, {\nobreak IRFGMRES} and {\nobreak QTV-FQGMRES} to solve the quaternion system for the signal matrix are listed in Table $\ref{table:signal}$, in which the notation has the following meanings:
``Iter''  denotes the average iteration, ``CPU time''  and  ``Residual'' denotes the CPU time and relative residual error required by {\nobreak GMRES}, {\nobreak QGMRES}, {\nobreak IRFGMRES} and {\nobreak QTV-FQGMRES}, respectively. We compare the performance of the proposed algorithm with the above methods for quaternion signal matrix.
We can see from the Table $\ref{table:signal}$ that
QTV-FQGMRES converges in fewer iterations and costs less CPU time than other algorithms, while their  residual errors are comparable with each other.
We can see that quaternion algorithms cost about $1/3$ iteation steps and $4/5$ CPU time of coresponding real ones.
Two quternion algorithms QTV-FQGMRES and QGMRES cost almost the same iteration steps, but the former is about 12 times faster than the later.

Especially, the convergence curves of IRFGMRES and QTV-FQGMRES are shown in Figure $\ref{fig:signal_iter}$, which indicates that QTV-FQGMRES stops earlier than IRFGMRES. QTV-FQGMRES stops the iteration at step 104 and IRFGMRES stops the iteration at step 297, that is, 193 iteration steps are saved.

\begin{table}[!h]\label{table:signal}
\tabcolsep 0pt
\caption{ Numerical results of GMRES, QGMRES, IRFGMRES and QTV-FQGMRES.}\label{table2} \vspace*{-10pt}
\begin{center}
\def\temptablewidth{1\textwidth}
{\rule{\temptablewidth}{1pt}}
\begin{tabular*}{\temptablewidth}{@{\extracolsep{\fill}}lcccc}
 {$Algorithm$}&
Iter &  CPU time& \ \ Residual \\ 
  \hline
                    $\texttt{ GMRES }$ &  300 &  16.9492& 5.3672e-07 \\
                    $\texttt{ QGMRES }$ &  105 & 11.3170& \textbf{2.2624e-10} \\%
                     $\texttt{ IRFGMRES }$ & 297 &1.1920& 8.1931e-07 \\
                     $\texttt{ QTV-FQGMRES}$ &  \textbf{104} &  \textbf{0.9938} & 4.9865e-07\\
\end{tabular*}
{\rule{\temptablewidth}{0.5pt}}
\end{center}
\end{table}

 \begin{figure}[!h]\label{fig:signal_iter}
\begin{center}
\includegraphics[height=7.5cm, width=12cm]{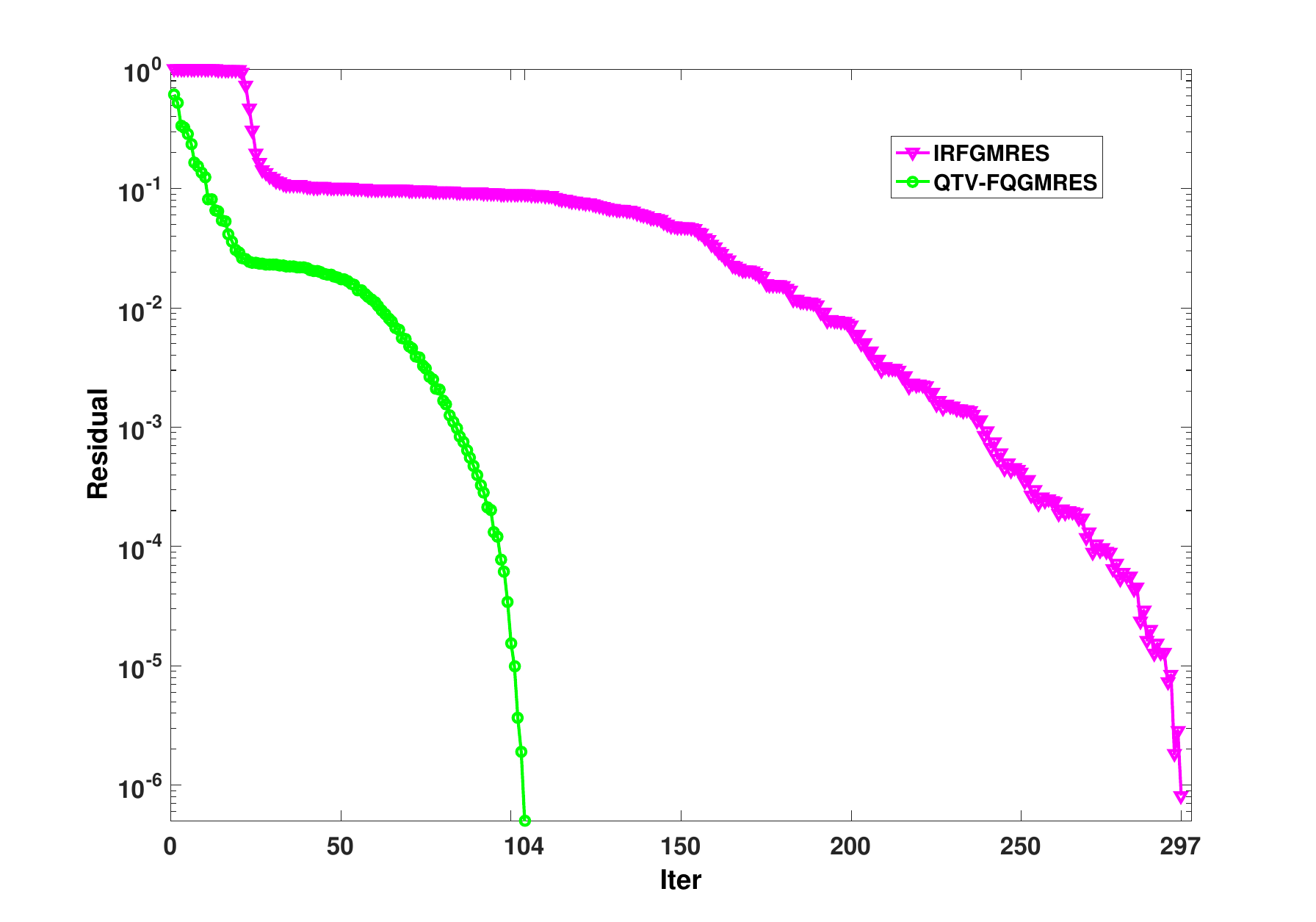}
\end{center}
 \caption{The relative residual errors of IRFGMRES and QTV-FQGMRES.}
\end{figure}

\end{example}

\begin{example}
In this example, we consider the color image pixel prediction problem. one can represent a color image with the spatial resolution of $m\times n$ pixels by an $m\times n$ pure quaternion matrix,
$\Aq =A_0+ A_1\iq + A_2\jq + A_3\kq$,
where $ A_0=0,~ A_1=(r_{ij}),~ A_2=(g_{ij}),~ A_3=(b_{ij})\in\mathbb{R}^{m\times n} $, and $ r_{ij}$, $ g_{ij}$ and $ b_{ij}$ are respectively the red, green and blue pixel values at the location $(i,j)$ in the image. For these tests, we artificially added zero-mean Gaussian noise of standard deviation 5 to a noise-free color image.
The testing color image Pepper, Mand, Traffic, Butterflies are shown in Figure \ref{fig:deblur}, whose size are $m=100;\ n=100$. Setting gauss noise=0.5.
The quality of the restored color image is indicated by the three standard criteria: PSNR, SNR, and SSIM.
The corresponding PSNR, SNR and SSIM values are in Table \ref{table3}.

We find that QTV-FQGMRES outperforms other testing methods in terms of the values. The quality of the restored images can be evaluated visually besides the values.
In Figure \ref{fig:deblur}, from left to right are the original images, observation images with added noise and blur; restored results by using IRFGMRES, SV-TV, QTV-FQGMRES.
It can be observed that the recovery effect is better with the quaternion QTV model in Figure \ref{fig:deblur}.
We observe that detailed geometry and texture can be recovered from the corrupted data by using the proposed QTV-FQGMRES model, and meanwhile, color distortion has been eliminated.
In Table \ref{table3}, we list the PSNR, SNR and SSIM values of rescovered color images by three models.
One can see that QTV-FQGMRES performs best among compared models.
Compared with the well-known SV-TV model, QTV-FQGMRES enhances PSNR, SNR and SSIM value  about 20\%, 25\% and 23\%.
\end{example}

\begin{table}[!htb]\label{table3}
\begin{lrbox}{\tablebox}
\begin{tabular}{|c||c|c|c|c|c|}
 \hline
$Images$     &Method     &PSNR&SNR &SSIM       \\  \hline\hline
\multirow{3}{*}{Pepper}
	&IRFGMRES    &   22.5596  & 16.5842 & 0.7657    \\
&SV-TV  &    27.0440    &  21.0686       & 0.8946    \\
	&QTV-FQGMRES  &   {\bf 31.6078 }  &   {\bf 25.6324  }  &{\bf 0.9261 }     \\ \hline
\multirow{3}{*}{Mand}
  &IRFGMRES   &   24.1224                      &  18.6398         & 0.8457       \\
&SV-TV    &    26.3023    &  20.8198       &  0.7576              \\
     &QTV-FQGMRES  &   {\bf  31.5988}        &  {\bf 26.1163}   &  {\bf   0.9448}   \\ \hline
\multirow{3}{*}{traffic}
&IRFGMRES   &   23.3553      &  15.5036       &  0.7105         \\
&SV-TV  &      26.2662      & 18.4145      & 0.8389             \\
&QTV-FQGMRES   &     {\bf 30.4904}              &   {\bf 22.6388}    &  {\bf   0.8945}    \\ \hline
\multirow{3}{*}{butterflies}
 &IRFGMRES    & 19.9320                   & 13.3098        &  0.7232        \\
&SV-TV  &    26.7618                 &  20.1396     &  0.8593
        \\
     &QTV-FQGMRES  &    {\bf  28.5739  }           &  {\bf  21.9517 }        &  {\bf 0.9150 }
 \\ \hline
\end{tabular}
\end{lrbox}
\centering
\caption{Numerical results of IRFGMRES, SV-TV and QTV-FQGMRES.}
\scalebox{0.85}{\usebox{\tablebox}}
\end{table}

 \begin{figure}[!h]
\centering
\includegraphics[width=0.9\textwidth]{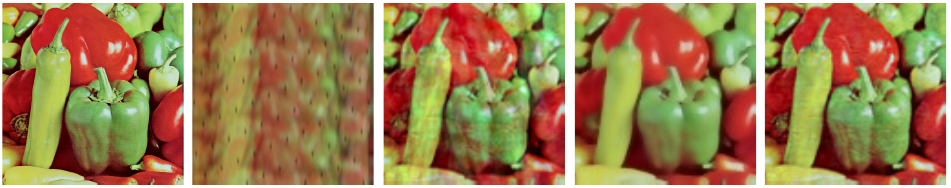}\\
\includegraphics[width=0.9\textwidth]{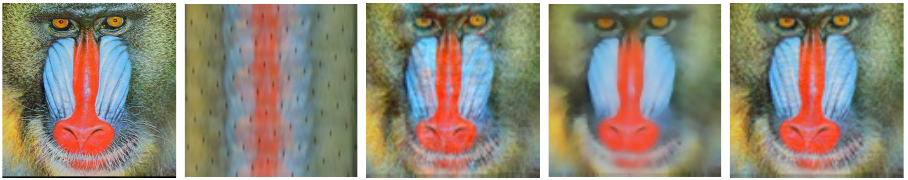}\\
\includegraphics[width=0.9\textwidth]{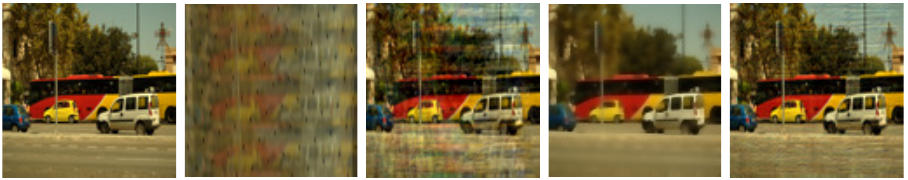}\\
\includegraphics[width=0.9\textwidth]{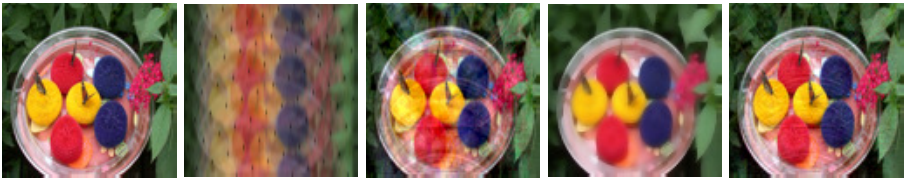}
\caption{ From left to right: orginal images; observed images; restored results by using IRFGMRES, SV-TV, QTV-FQGMRES.}
\label{fig:deblur}
\end{figure}

\section{Conclusion}\label{s:conclusion}

To solve the problem of three-dimensional signal or color image restorations,
we proposed a new quaternion total variation regularization model and developed fast and stable algorithms.
The proposed preconditioned QGMRES algorithm is a difficult problem left for future research in \cite{jnw19}.
An improved convergence theory of QGMRES is also established.
Numerical results have demonstrated that the proposed model and algorithms perform better than other compared methods
and especially, they improve image quality by 20\% in color image restoration.
In the future, we will study the structured (such as Toeplitz, Hankel, etc.) quaternion linear systems and present the related preconditioned algorithms.

\bigskip


\begin{thebibliography}{10}

\bibitem{Bjorck1996}
{\AA}. Bj\"{o}rck.
\newblock Numerical Methods for Least Squares Problems,
\newblock {\em SIAM}, 1996.


\bibitem{Blomgren1998}
P. Blomgren and T. F. Chan.
\newblock Color TV: total variation methods for restoration of vector-valued images,
\newblock {\em IEEE transactions on image processing}, 7(3): 304-309, 1998.

\bibitem{Berisha2014}
S. Berisha and J. G. Nagy.
\newblock Iterative methods for image restoration,
\newblock {\em Academic Press Library in Signal Processing}, Elsevier, 4: 193-247, 2014.



\bibitem{Calvetti2000}
D. Calvetti, S. Morigi, L. Reichel, and F. Sgallari.
\newblock Tikhonov regularization and the L-curve for large discrete ill-posed problems,
\newblock {\em J. Comput. Appl. Math.}, 123(1-2): 423-446, 2000.

\bibitem{Calvetti2002}
D. Calvetti, B. Lewis, and L. Reichel.
\newblock On the regularizing properties of the GMRES method.
\newblock {\em Numer. Math.}, 91: 605-625, 2002.

\bibitem{Calvetti2005}
D. Calvetti, L. Reichel, and A. Shuibi.
\newblock Invertible smoothing preconditioners for linear discrete ill-posed problems,
\newblock {\em Applied numerical mathematics}, 54(2): 135-149, 2005.

\bibitem{Chung2011}
J. Chung, S. Knepper, and J. G. Nagy.
\newblock Large-Scale Inverse Problems in Imaging,
\newblock {\em Handbook of Mathematical Methods in Imaging}, chap. 2, pp. 43-86. Springer, Berlin, 2011.


\bibitem{jin1992}
R. H. F. Chan and X. Q. Jin.
\newblock A family of block preconditioners for block systems,
\newblock {\em SIAM journal on scientific and statistical computing}, 13(5): 1218-1235, 1992.

\bibitem{jin2007}
R. H. F. Chan and X. Q. Jin.
\newblock An Introduction to Iterative Toeplitz Solvers,
\newblock {\em Society for industrial and applied mathematics}, 2007.



\bibitem{Chan2005}
T. F. Chan, J. Shen.
\newblock Image Processing and Analysis: Variational, PDE, Wavelet, and Stochastic Methods,
\newblock {\em SIAM}, 2005.


\bibitem{Duran2015}
J. Duran, M. Moeller, C. Sbert, and D. Cremers.
\newblock Collaborative total variation: A general framework for vectorial TV models,
\newblock {\em SIAM Journal on Imaging Sciences}, 9(1): 116-151, 2016.

\bibitem{Engl1996}
H. W. Engl, M. Hanke, and A. Neubauer.
\newblock Regularization of Inverse Problems,
\newblock{\em Springer Science $\&$ Business Media}, 1996.


\bibitem{Gazzola2015}
S. Gazzola, P. Novati, and M. R. Russo.
\newblock On Krylov projection methods and Tikhonov regularization,
\newblock {\em Electron. Trans. Numer. Anal.}, 44(1): 83-123, 2015.

\bibitem{Gazzola2019}
S. Gazzola, P. C. Hansen, and J. G. Nagy.
\newblock IR Tools: a MATLAB package of iterative regularization methods and large-scale test problems,
\newblock {\em Numerical Algorithms}, 81(3): 773-811, 2019.



\bibitem{Hansen2010}
P. C. Hansen.
\newblock Discrete Inverse Problems: Insight and Algorithms,
\newblock {\em SIAM}, 2010.

\bibitem{Hansen2007}
P. C. Hansen and T. K. Jensen.
\newblock Smoothing-norm preconditioning for regularizing minimum-residual methods,
\newblock {\em SIAM J. Matrix Anal. Appl.}, 29(1): 1-14, 2007.

\bibitem{Hamilton66}
W. R. Hamilton.
\newblock Elements of Quaternions,
\newblock {\em Longmans, Green, $\&$ Company}, 1866.


\bibitem{Jensen2007}
T, K, Jensen and P. C. Hansen.
\newblock Iterative regularization with minimum-residual methods,
\newblock {\em BIT Numer. Math.}, 47: 103-120, 2007.

\bibitem{jin2010}
X. Q. Jin.
\newblock Preconditioning Techniques for Toeplitz Systems,
\newblock {\em Higher Education Press}, 2010.

\bibitem{jwl13}
Z. G. Jia, M. S. Wei, and S. T. Ling.
\newblock A new structure-preserving method for quaternion Hermitian eigenvalue problems,
\newblock {\em J. Comput. Appl. Math.}, 239: 12-24, 2013.

\bibitem{jwzc18}
Z. G. Jia, M. S. Wei, M. X. Zhao, and Y. Chen.
\newblock A new real structure-preserving quaternion QR algorithm,
\newblock {\em J. Comput. Appl. Math.}, 343: 26--48, 2018.

\bibitem{jns19nla}
Z. G. Jia, M. K. Ng, and G. J. Song.
\newblock Robust quaternion matrix completion with applications to image inpainting,
\newblock {\em Numer. Linear Algebra  Appl.}, 26(4), e2245, 2019.


\bibitem{jnw19}
Z. G. Jia, M. K. Ng, and W. Wang.
\newblock Color image restoration by saturation-value (SV) total variation,
\newblock {\em SIAM J. Imaging Sci.}, 12(2):972--1000, 2019.

\bibitem{jing21}
Z. G. Jia and M. K. Ng.
\newblock Structure preserving quaternion generalized minimal residual method,
\newblock {\em SIAM J. Matrix Anal. Appl.}, 42(2): 616-634, 2021.



\bibitem{Kilmer2007}
M. E. Kilmer, P. C. Hansen, and M. I. Espanol.
\newblock A projection-based approach to general-form Tikhonov regularization,
\newblock {\em SIAM J. Sci. Comput.}, 29(1): 315-330, 2007.

 

\bibitem{Mueller2012}
J. L. Mueller and S. Siltanen.
\newblock Linear and Nonlinear Inverse Problems with Practical Applications,
\newblock {\em SIAM}, 2012.

 
\bibitem{rodl14}
L. Rodman.
\newblock Topics in Quaternion Linear Algebra,
\newblock {\em Princeton University Press}, 2014.

\bibitem{Rudin1992}
L. I. Rudin, S. Osher, and E. Fatemi.
\newblock Nonlinear total variation based noise removal algorithms,
\newblock {\em Phys. D}, 60(1-4): 259-268, 1992.

\bibitem{saad03}
Y. Saad.
\newblock Iterative Methods for Sparse Linear Systems,
\newblock {\em SIAM}, Philadelphia, 2003.


\bibitem{sasc86}
Y. Saad and M. H. Schultz.
\newblock GMRES: A generalized minimal residual algorithm for solving nonsymmetric linear systems,
\newblock {\em SIAM J. Sci. Stat. Comput.}, 7(3): 856-869, 1986.


\bibitem{Vogel2002}
C. R. Vogel.
\newblock Computational methods for inverse problems,
\newblock {\em SIAM}, 2002.

\bibitem{wei2018}
M. S. Wei, Y. Li, F. X. Zhang, and J. L. Zhao.
\newblock Quaternion Matrix Computations,
\newblock {\em Nova Science Publishers}, Hauppauge, NY, 2018.

\bibitem{Wohlberg2007}
B. Wohlberg and P. Rodriguez.
\newblock An iteratively reweighted norm algorithm for minimization of total variation functionals,
\newblock {\em IEEE Signal Processing Letters}, 14(12): 948-951, 2007.

\bibitem{zhf97}
F. Z. Zhang.
\newblock Quaternions and matrices of quaternions,
\newblock {\em Linear Algebra Appl.}, 251: 21-57, 1997.

\end{thebibliography}
\end{document}